\newtheorem{definition}{Definition}
\newtheorem{theorem}[definition]{Theorem}
\newtheorem{proposition}[definition]{Proposition}
\newtheorem{corollary}[definition]{Corollary}
\newtheorem{lemma}[definition]{Lemma}
\newcommand{\spec}{\mathrm{spec}}
\LetLtxMacro\orgvdots\vdots
\LetLtxMacro\orgddots\ddots
\DeclareRobustCommand\vdots{%
  \mathpalette\@vdots{}%
}
\newcommand*{\@vdots}[2]{%
  \sbox0{$#1\cdotp\cdotp\cdotp\m@th$}%
  \sbox2{$#1.\m@th$}%
  \vbox{%
    \dimen@=\wd0 %
    \advance\dimen@ -3\ht2 %
    \kern.5\dimen@
    \dimen@=\wd2 %
    \advance\dimen@ -\ht2 %
    \dimen2=\wd0 %
    \advance\dimen2 -\dimen@
    \vbox to \dimen2{%
      \offinterlineskip
      \copy2 \vfill\copy2 \vfill\copy2 %
    }%
  }%
}
\DeclareRobustCommand\ddots{%
  \mathinner{%
    \mathpalette\@ddots{}%
    \mkern\thinmuskip
  }%
}
\newcommand*{\@ddots}[2]{%
  \sbox0{$#1\cdotp\cdotp\cdotp\m@th$}%
  \sbox2{$#1.\m@th$}%
  \vbox{%
    \dimen@=\wd0 %
    \advance\dimen@ -3\ht2 %
    \kern.5\dimen@
    \dimen@=\wd2 %
    \advance\dimen@ -\ht2 %
    \dimen2=\wd0 %
    \advance\dimen2 -\dimen@
    \vbox to \dimen2{%
      \offinterlineskip
      \hbox{$#1\mathpunct{.}\m@th$}%
      \vfill
      \hbox{$#1\mathpunct{\kern\wd2}\mathpunct{.}\m@th$}%
      \vfill
      \hbox{$#1\mathpunct{\kern\wd2}\mathpunct{\kern\wd2}\mathpunct{.}\m@th$}%
    }%
  }%
}
\begin{document}

\title[Energy-Preserving Iteration Schemes for Gauss Collocation Integrators]{Energy-Preserving Iteration Schemes \\ for Gauss Collocation Integrators}
\author[S. Maier]{Stefan Maier$^{1}$}
\author[N. Marheineke]{Nicole Marheineke$^{1}$}
\author[A. Frommer]{Andreas Frommer$^{2}$}
\date{\today\\
$^1$ Universit\"at Trier, FB IV -- Mathematics, Universit\"atsring 15, D-54296 Trier, Germany, \\
$^2$ Bergische Universität Wuppertal, Department of Mathematics, Gaußstraße 20, D-42119 Wuppertal\\
\emph{Acknowledgement:} This work was partially funded by the Deutsche Forschungsgemeinschaft (DFG, German Research Foundation) -- Project-ID 531152215 -- CRC 1701.}
\begin{abstract}
In this work, we develop energy-preserving iterative schemes for the (non-)linear systems arising in the Gauss integration of Poisson systems with quadratic Hamiltonian. Exploiting the relation between Gauss collocation integrators and diagonal Pad\'e approximations, we establish a Krylov-subspace iteration scheme based on a $Q$-Arnoldi process for linear systems that provides energy conservation not only at convergence ---as standard iteration schemes do---, but also at the level of the individual iterates. It is competitive with GMRES in terms of accuracy and cost for a single iteration step and hence offers significant efficiency gains, when it comes to time integration of high-dimensional Poisson systems within given error tolerances. On top of the linear results, we consider non-linear Poisson systems and design non-linear solvers for the implicit midpoint rule (Gauss integrator of second order), using the fact that the associated Pad\'e approximation is a Cayley transformation.  For the non-linear systems arising at each time step, we propose fixed-point and Newton-type iteration schemes that inherit the convergence order with comparable cost from their classical versions, but have energy-preserving iterates.
\end{abstract}
\maketitle

\noindent
{\sc Keywords.} Poisson system; Krylov-subspace iteration scheme; Pad\'e approximation; Cayley transformation; Newton-like method; Fixed-point iteration \\
{\sc AMS-Classification.} 37Mxx; 47J25; 65Lxx; 65Pxx  
\section{Introduction}
This article deals with efficient energy-preserving solvers for the (non-)linear systems arising in implicit numerical integration schemes for Poisson systems of the form  
\begin{align}
    \label{eq:energypresSys}
    \dot{y} = J(y)\nabla \mathcal{H}(y), \quad y(0) = y_0
\end{align} 
with state $y\in \mathcal{C}^1([0, T], \mathbb{R}^n)$, skew-symmetric structure matrix  $J(y)^T = -J(y)\in \mathbb{R}^{n\times n}$ for all $y \in \mathbb{R}^{n}$ and quadratic Hamiltonian $\mathcal{H}(y) = \tfrac{1}{2}y^TQy$ with $Q^T = Q \in \mathbb{R}^{n \times n}$, $Q > 0$. The Hamiltonian $\mathcal{H}$ is an invariant of the system, i.e.\ every solution $y(t)$ of \eqref{eq:energypresSys} fulfills $\mathcal{H}(y(t)) = 
\mathcal{H}(y_0)$ for all $t \in [0,T]$. In physical models, $\mathcal{H}$ is usually an energy functional, such that the invariance can be interpreted as conservation of energy. The Poisson system \eqref{eq:energypresSys} is an autonomous non-linear ordinary differential equation (ODE) which becomes linear if $J \in \mathbb{R}^{n \times n}$ is constant, i.e.
\begin{align}
\label{eq:energypresSysLinQ}
\dot{y} = JQy, \quad y(0) = y_0.
\end{align}

Poisson systems arise in the modeling of energy exchange, \cite{Hairer2004GeometricNI,book_poisson_structures}. Special types are the Lie-Poisson systems where the structure matrix depends linearly on the state, and the Hamiltonian systems where the structure matrix is given by
$J = \left(\begin{smallmatrix}
    0 & I\\
    -I & 0 
    \end{smallmatrix}\right)$ with identity matrix $I$, see, e.g., \cite{arnol2013mathematical, McLachlan_1998}. Poisson systems also appear in the port-Hamiltonian modeling framework \cite{PortHamIntrodcOverview} when splitting schemes with energy-based decomposition are applied, \cite{phDAE,frommer2023operatorSplitting,monch2024commutatorbasedoperatorsplittinglinear}. Various numerical methods have been developed for time integration that preserve the invariants which characterize properties such as symplectic structures and energy-preservation. An important class of numerical integrators that preserve quadratic invariants as given in \eqref{eq:energypresSys} are the Gauss methods. Gauss integrators are collocation schemes belonging to the group of implicit A-stable Runge-Kutta methods, \cite{Hairer2004GeometricNI}. Discrete gradient methods are particularly suitable for more general (non-quadratic) Hamiltonians \cite{mclachlan1999geometric, norton2013discretegradientmethodspreserving}. Projection-based approaches use explicit integration schemes, such as explicit Runge-Kutta integrators, and then project the numerical solution onto the manifold characterized by the invariants by solving a minimization problem, \cite{A_Norton_2015}. A way to look at invariants is to consider a manifold based on a Lie group action; for a deeper insight into the theory we refer to \cite{olver1986introduction, varadarajan2013lie}. This leads to the class of Lie group integrators, such as Crouch-Grossmann methods \cite{jackiewiczCrouchGrossmanMethods} and Munthe-Kaas Runge-Kutta methods \cite{crouch1993numerical,MUNTHEKAAS1999115}, where local parameterizations are used. In this context we highlight the Cayley transformation as a local parameterization for quadratic Lie groups, see, e.g., \cite{lewis1994conserving,LopezApplCayleyapproach,wandeltGeoIntLieGrCayleyTrans}. 
    
In this paper, we consider the well-established Gauss collocation integrators for the Poisson system \eqref{eq:energypresSys} which preserve its structure in the sense of energy conservation. Since the integrators require to solve a (non-)linear system of equations at each time step, the computation can become very expensive ---especially in case of high-dimensional problems or high-order approximations. For this reason, we aim to develop iterative schemes that handle the resulting systems of equations particularly efficiently. We investigate the relation between Gauss methods and Pad\'e approximations
and derive a Krylov-subspace based iterative solver for linear systems, our so-called \emph{$Q$-Arnoldi approximation}, which preserves the energy at each iterative step. Hence, we can terminate the iteration as soon as we have reached a desired accuracy, which should usually be consistent with resolution and order of the used Gauss integrator. In this way, we achieve significant efficiency gains over standard iteration schemes, such as GMRES \cite{KrylovPrinciplesAnalysis,SogabeKrylovSubspaceMethods}, that cause an accumulative deviation from energy conservation if not run until full convergence at each time step. On top of the linear results, we design non-linear solvers for the implicit midpoint rule (Gauss integrator of order $p=2$). We particularly make use of the fact that the associated Pad\'e approximation is a Cayley transformation. The resulting schemes of fixed-point type as well as of Newton type are competitive with the classical fixed-point iteration and Newton-like methods in terms of convergence behavior and cost for a single iteration step, but superior in terms of energy preservation: energy is not only conserved upon convergence, but also at the level of each individual iterate. 

This paper is structured as follows: We introduce Pad\'e approximations and discuss their invariance properties in a Lie group / Lie algebra setting in Section~\ref{sec:pade}. In Section~\ref{krylovsubSec} we exploit the relation between Gauss collocation integrators and diagonal Pad\'e approximations of the exponential function and establish a Krylov-subspace iteration scheme based on a $Q$-Arnoldi process that provides energy-preserving iterates when solving linear Poisson systems. A performance study in comparison to GMRES is provided in Section~\ref{sec:num_lin}. Section~\ref{sec:nonlinear} presents the extension to non-linear Poisson systems. We set up energy-preserving non-linear solvers of linear and superlinear convergence order for the implicit midpoint rule. Their performance is studied in Section~\ref{sec:num_nl}.

\section{Padé approximations} \label{sec:pade}

Padé approximations are rational approximations to functions obtained by matching derivatives at the origin, cf.\ \cite{Baker_Graves-Morris_1996,numericalAnaIntro}. The diagonal degree $s$ Padé approximation $\mathcal{R}_s(z)$ to the exponential function is given by 
\begin{align}
\label{eq:diagonalePadé}
    \mathcal{R}_s(z) = \frac{\mathcal{D}_{s}(z)}{\mathcal{D}_{s}(-z)}, \quad \mathcal{D}_{s}(z) = \sum_{j = 0}^s \frac{s! (2s-j)!}{(2s)!j!(s-j)!}z^j, \enspace z \in \mathbb{C}, \enspace \mathcal{D}_s(-z) \neq 0.
\end{align}
The following is known about $\mathcal{R}_s(z)$; see \cite{numericalAnaIntro}.
\begin{proposition} \label{prop:poles}
All poles $\tau_j$ of $\mathcal{R}_s(z)$, i.e.\ the zeros of $\mathcal{D}_s(-z)$, are simple and located in the open right half plane, i.e.\ $\Re(\tau_j) > 0$. Moreover, in the partial fractional decomposition of $\mathcal{R}_s(z)$,
\begin{align} \label{eq:pfd}
   \mathcal{R}_s(z) = \frac{\mathcal{D}_{s}(z)}{\mathcal{D}_{s}(-z)} = (-1)^s + \sum_{j = 1}^{s} \frac{\omega_j}{z - \tau_j},
\end{align}
the poles $\tau_j$ and coefficients $\omega_j$ appear in complex conjugate pairs or are real-valued. 
\end{proposition}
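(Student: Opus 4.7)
The plan is to reduce the three assertions to three standard facts about the diagonal $(s,s)$-Pad\'e approximant to the exponential: the order condition $\mathcal{R}_s(z) - e^z = \mathcal{O}(z^{2s+1})$, the symmetry $\mathcal{R}_s(z)\mathcal{R}_s(-z) = 1$, and $A$-stability ($|\mathcal{R}_s(z)|\le 1$ on $\{\Re z \le 0\}$, Ehle's theorem). The first preparatory step is to show that $\mathcal{D}_s(z)$ and $\mathcal{D}_s(-z)$ are coprime: a common zero would give a pair $\pm\zeta$ of roots of $\mathcal{D}_s$, and cancelling the factor $z^2-\zeta^2$ from numerator and denominator would reduce $\mathcal{R}_s$ to a rational function of type $(s-2,s-2)$ still satisfying $\mathcal{R}_s(z) - e^z = \mathcal{O}(z^{2s+1})$, whereas any $(s-2,s-2)$-rational function can match $e^z$ only up to order $\mathcal{O}(z^{2s-3})$. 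Consequently, the poles of $\mathcal{R}_s$ are precisely the zeros of $\mathcal{D}_s(-z)$.

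For the simplicity of these poles I would use that $\mathcal{D}_s$ coincides with the Kummer polynomial ${}_1F_1(-s;-2s;\cdot)$ and hence solves the confluent hypergeometric ODE $zu''-(2s+z)u'+su = 0$. A double root $\alpha\neq 0$ of $\mathcal{D}_s$ would give $u(\alpha)=u'(\alpha)=0$ and, from the ODE evaluated at $\alpha$, $u''(\alpha)=0$; iterated differentiation of the ODE then forces $u^{(k)}(\alpha)=0$ for every $k$ and hence $u\equiv 0$. The origin is ruled out as a root by $\mathcal{D}_s(0)=1$. For the location of the poles in the open right half-plane, $A$-stability immediately implies that $\mathcal{R}_s$ is holomorphic on $\{\Re z\le 0\}$ and thus has no poles there; as an alternative route for the imaginary axis alone, coprimality together with the reality of the coefficients of $\mathcal{D}_s$ excludes a purely imaginary pole $i\beta$, since conjugation would yield $\mathcal{D}_s(i\beta)=\mathcal{D}_s(-i\beta)=0$, contradicting coprimality.

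The conjugate-pair structure is essentially automatic: the reality of the coefficients of $\mathcal{D}_s$ forces its roots, and thus the poles $\tau_j$, to come in conjugate pairs or to be real, and the residues $\omega_j=-\mathcal{D}_s(\tau_j)/\mathcal{D}_s'(-\tau_j)$ inherit the same symmetry under complex conjugation. The main obstacle in this plan is $A$-stability itself, whose classical proof via the $E$-polynomial $E(y)=|\mathcal{D}_s(iy)|^2-|\mathcal{D}_s(-iy)|^2$ is nontrivial; rather than reproducing it I would cite Ehle's theorem.
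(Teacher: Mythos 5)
The paper does not actually prove this proposition --- it is stated as known and referred to \cite{numericalAnaIntro} --- so there is no in-text argument to compare against; what you supply is a genuine proof sketch, and it is essentially correct. The coprimality step (a common zero $\zeta$ of $\mathcal{D}_s(z)$ and $\mathcal{D}_s(-z)$ makes $z^2-\zeta^2$ a common factor, reducing $\mathcal{R}_s$ to type $(s-2,s-2)$ while retaining order of contact $2s+1$) is sound, provided you note $\mathcal{D}_s(0)=1$ so that $\zeta\neq 0$ and the two roots $\pm\zeta$ are distinct; it rests on the normality of the Pad\'e table of $e^z$ (nonvanishing of the leading coefficient in the error expansion), which you should either cite or extract from the explicit error formula. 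The Kummer identification $\mathcal{D}_s={}_1F_1(-s;-2s;\cdot)$ checks out against the defining coefficients, the ODE argument for simplicity of the roots is correct, and the residue formula $\omega_j=-\mathcal{D}_s(\tau_j)/\mathcal{D}_s'(-\tau_j)$ together with the realness of the coefficients does give the conjugate symmetry of both poles and weights. Two caveats. First, the one substantive claim you outsource --- exclusion of poles from the open left half-plane via Ehle's $A$-stability theorem --- is where the real work hides: the standard proofs of that theorem (via the $E$-polynomial and the maximum modulus principle) require analyticity of $\mathcal{R}_s$ on the closed left half-plane as an \emph{input}, i.e., they first establish the pole location (e.g., by showing $\mathcal{D}_s$ is a Hurwitz polynomial) and then deduce $A$-stability. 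Citing Ehle is not logically circular, but it reduces your argument for $\Re(\tau_j)>0$ to a citation --- which is precisely what the paper itself does --- and your self-contained alternative only rules out the imaginary axis, not the open left half-plane. Second, you never address the constant term $(-1)^s$ in \eqref{eq:pfd}; it follows at once from the ratio of the leading coefficients of $\mathcal{D}_s(z)$ and $\mathcal{D}_s(-z)$, but it is part of the statement and should be said.
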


If $A \in \mathbb{R}^{n \times n}$ is a matrix whose spectrum $\spec(A)$ has empty intersection with the poles $\tau_j$ of $\mathcal{R}_s(-z)$ ---which is equivalent to $\mathcal{D}_s(-A)$ being non-singular---,  we can extend the diagonal Padé approximation to a (matrix) function of $A$ in the usual sense, which yields
\begin{align}
\label{eq:extDiagonalPadé}
    \mathcal{R}_s(A) = \mathcal{D}_s(-A)^{-1}\mathcal{D}_s(A) = \mathcal{D}_s(A)\mathcal{D}_s(-A)^{-1} = (-I)^s + \sum_{j = 1}^{s} \omega_j (A - \tau_jI)^{-1}.
\end{align}
Note that these equalities hold because identities between scalar functions on $\mathbb{C}$ generate similar identities for the corresponding matrix functions; see \cite{matrixFunctBook,lancaster1985theory} for details. The matrix extension of the diagonal Padé approximation \eqref{eq:extDiagonalPadé} has important invariance properties. To discuss these, we introduce the quadratic Lie group $\mathcal{G}_Q$ and the associated Lie algebra $\mathfrak{g}_Q$, which for fixed $Q \in \mathbb{R}^{n \times n}$ are given by 
\[
\mathcal{G}_Q = \{ B \in \mathbb{R}^{n \times n}  :  B^TQB = Q \}, \quad \mathfrak{g}_Q = \mathfrak{T}_I\mathcal{G}_Q = \{ A \in \mathbb{R}^{n \times n}  :  Q A + A^TQ = 0 \},
\]
where $\mathfrak{T}_I$ is the tangent space at the identity $I$.
The case of $Q$ being symmetric positive definite, $Q > 0$, is of particular interest. Then $Q$ induces the inner product $\langle x,y\rangle_Q:= \langle Qx,y\rangle$ on $\mathbb{R}^n$, and $\mathfrak{g}_Q$ consists of exactly those matrices $A$ which are anti-selfadjoint with respect to this inner product, i.e.\
\[
\langle Ax,y\rangle_Q = - \langle x,Ay \rangle_Q \enspace \text{for all } x,y \in \mathbb{R}^{n}.
\]  
For future use, we observe the following properties of such $\mathfrak{g}_Q$ and $\mathcal{G}_Q$.

\begin{lemma} \label{lem:lie_algebra_spec} Let $A\in \mathfrak{g}_Q$, $Q >0$. Then all eigenvalues of $A$ are purely imaginary, i.e.\ $\spec(A) \subset i\mathbb{R}$.
\end{lemma}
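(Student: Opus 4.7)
The plan is to exploit the anti-selfadjointness of $A$ with respect to the $Q$-inner product, which is the defining property of $\mathfrak{g}_Q$ restated right before the lemma. Two routes are natural; I would present the similarity-transform route because it cleanly reduces the claim to the well-known fact that real skew-symmetric matrices have purely imaginary spectrum.

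Since $Q > 0$, the unique symmetric positive definite square root $Q^{1/2}$ exists and is invertible. Define $\tilde A := Q^{1/2} A Q^{-1/2}$, so $\tilde A$ is similar to $A$ and in particular $\spec(\tilde A) = \spec(A)$. The defining relation $Q A + A^T Q = 0$ rewrites as $A^T = -Q A Q^{-1}$, and therefore
\[
\tilde A^T = Q^{-1/2} A^T Q^{1/2} = Q^{-1/2}(-Q A Q^{-1}) Q^{1/2} = -Q^{1/2} A Q^{-1/2} = -\tilde A.
\]
Hence $\tilde A$ is real skew-symmetric, whose eigenvalues are purely imaginary by the spectral theorem for real normal matrices (or directly from $\tilde A^T = -\tilde A$ paired with the standard Hermitian inner product). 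This gives $\spec(A) = \spec(\tilde A) \subset i\mathbb{R}$.

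As an alternative self-contained argument, I would instead pick any eigenpair $(\lambda, v)$ of $A$ with $v \in \mathbb{C}^n \setminus \{0\}$ and evaluate the sesquilinear extension $\langle x,y\rangle_Q = y^* Q x$ on both sides: on the one hand $\langle Av, v\rangle_Q = \lambda\,\langle v, v\rangle_Q$, while using $A^T Q = -Q A$ yields $\langle v, Av\rangle_Q = (Av)^* Q v = v^* A^T Q v = -v^* Q A v = -\langle Av, v\rangle_Q$, whence $\lambda\,\langle v, v\rangle_Q = -\overline{\lambda}\,\langle v, v\rangle_Q$. Since $Q > 0$ forces $\langle v, v\rangle_Q > 0$, this gives $\lambda + \overline{\lambda} = 0$, i.e.\ $\lambda \in i\mathbb{R}$.

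There is no real obstacle here; the only thing to be careful about is invoking the positive definiteness of $Q$ at exactly the right place---either to guarantee existence of $Q^{1/2}$ (first route) or to conclude $\langle v, v\rangle_Q \neq 0$ for the possibly complex eigenvector (second route). Positive definiteness is essential: for merely invertible symmetric $Q$ the spectrum of elements of $\mathfrak{g}_Q$ need not lie on $i\mathbb{R}$, which is worth a brief remark if space allows.
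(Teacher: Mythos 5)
Your proposal is correct; in fact you give two valid arguments. Your secondary, ``self-contained'' argument is essentially the paper's own proof: the authors pick an eigenpair, compute $\lambda\langle Qx,x\rangle = \langle QAx,x\rangle = -\overline{\lambda}\langle Qx,x\rangle$ using $A^TQ=-QA$, and divide by $\langle Qx,x\rangle\neq 0$. Your primary route---conjugating by $Q^{1/2}$ to get the real skew-symmetric matrix $\tilde A = Q^{1/2}AQ^{-1/2}$ and quoting the standard fact that such matrices have purely imaginary spectrum---is a genuinely different and equally valid path. The similarity computation checks out ($Q^{1/2}$ and $Q^{-1/2}$ are symmetric, so $\tilde A^T = Q^{-1/2}A^TQ^{1/2} = -\tilde A$). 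What it buys beyond the lemma is worth noting: since $\tilde A$ is normal, you get for free that $A$ is diagonalizable with a $Q$-orthogonal eigenbasis of $\mathbb{C}^n$---a fact the paper later invokes without proof in the argument for Theorem~\ref{th:PadeLip}, so your route would actually supply that missing justification. The paper's (and your alternative) eigenvector argument is more elementary in that it avoids introducing the matrix square root, at the cost of proving only the spectral location and not diagonalizability. Your closing remark that positive definiteness is essential is also apt: for indefinite invertible symmetric $Q$ the corresponding $\mathfrak{g}_Q$ contains elements with nonzero real eigenvalues.
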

\begin{proof} For $\lambda \in \spec(A)$ with eigenvector $x$ we have 
\[
\lambda \langle Qx,x \rangle = \langle QAx,x \rangle = \langle x,A^TQx \rangle = \langle x, -QAx \rangle = -\overline{\lambda} \langle x, Qx \rangle  = -\overline{\lambda} \langle Qx, x \rangle.
\]
Since $\langle Qx,x\rangle \neq 0$, this implies $\lambda = - \overline{\lambda}$, which means $\lambda \in i\mathbb{R}$.
\end{proof}

We now turn to the announced invariance properties.

\begin{theorem}
\label{th:LieGroupAlgebraPade}
For the diagonal Pad\'e approximations $\mathcal{R}_s$ we have
\begin{itemize}
\item[(i)] If $A \in \mathfrak{g}_Q$ for some $Q$ and $\mathcal{R}_s(A)$ is defined, then $\mathcal{R}_s(A) \in \mathcal{G}_Q$.
\item[(ii)] If $A \in \mathfrak{g}_Q$ and $Q>0$, then $\mathcal{R}_s(A)$ is always defined, i.e.\ $\mathcal{R}_s$ maps all of $\mathfrak{g}_Q$ on $\mathcal{G}_Q$.
\end{itemize}
\end{theorem}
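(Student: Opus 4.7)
The plan is to establish (i) by a purely algebraic conjugation identity derived from the polynomial structure of $\mathcal{D}_s$, and to deduce (ii) from the spectral information provided by Lemma~\ref{lem:lie_algebra_spec} and Proposition~\ref{prop:poles}.

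For (i), I would start from the defining relation $A^T Q = -QA$ and observe by a one-line induction that $(A^T)^k Q = (-1)^k Q A^k$ for every $k\geq 0$. Summing against the (real) coefficients of $\mathcal{D}_s$ yields the conjugation identity
\[
\mathcal{D}_s(A)^T Q \;=\; Q\,\mathcal{D}_s(-A),
\]
and, since $A\in\mathfrak{g}_Q$ trivially implies $-A\in\mathfrak{g}_Q$, the same argument with $A$ replaced by $-A$ gives $\mathcal{D}_s(-A)^T Q = Q\,\mathcal{D}_s(A)$. The hypothesis that $\mathcal{R}_s(A)$ is defined means precisely that $\mathcal{D}_s(-A)$ is invertible, and since all powers of $A$ commute, I can rewrite
\[
\mathcal{R}_s(A)^T Q\, \mathcal{R}_s(A) \;=\; \mathcal{D}_s(-A)^{-T}\,\mathcal{D}_s(A)^T\, Q\,\mathcal{D}_s(-A)^{-1}\mathcal{D}_s(A).
\]
Substituting the first identity collapses the middle block to $\mathcal{D}_s(-A)^{-T} Q\,\mathcal{D}_s(A)$, and applying the second identity replaces $Q\,\mathcal{D}_s(A)$ by $\mathcal{D}_s(-A)^T Q$, so the outermost factors cancel and we are left with $Q$, which is exactly $\mathcal{R}_s(A)\in\mathcal{G}_Q$.

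For (ii), the point is simply that $\mathcal{R}_s(A)$ fails to be defined exactly when some pole $\tau_j$ of $\mathcal{R}_s$ (equivalently, some zero of $\mathcal{D}_s(-z)$) lies in $\spec(A)$. Proposition~\ref{prop:poles} places every $\tau_j$ strictly in the open right half plane, while Lemma~\ref{lem:lie_algebra_spec} confines $\spec(A)$ to the imaginary axis, so the two sets are disjoint, $\mathcal{D}_s(-A)$ is non-singular, and $\mathcal{R}_s(A)$ is defined; combined with (i), this gives the claimed map $\mathfrak{g}_Q \to \mathcal{G}_Q$.

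The only real obstacle is bookkeeping in the rearrangement step of (i): one must exploit the commutativity of $\mathcal{D}_s(A)^T$ and $\mathcal{D}_s(-A)^{-T}$ as polynomials in $A^T$ to slot the factors into the correct order, and it is important to invoke the conjugation identity on the $(-A)$-side rather than on the $A$-side — otherwise one would artificially need $\mathcal{D}_s(A)$ to be invertible in addition to $\mathcal{D}_s(-A)$, which the hypothesis of (i) does not provide.
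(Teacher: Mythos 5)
Your proposal is correct and follows essentially the same route as the paper: the conjugation identities $\mathcal{D}_s(\pm A)^TQ = Q\,\mathcal{D}_s(\mp A)$ obtained from $A^TQ=-QA$, combined with commutativity of polynomials in $A$, give (i), and (ii) follows from the disjointness of $\spec(A)\subset i\mathbb{R}$ (Lemma~\ref{lem:lie_algebra_spec}) and the poles in the open right half plane (Proposition~\ref{prop:poles}). Your closing remark about only needing invertibility of $\mathcal{D}_s(-A)$ is a careful touch consistent with the intended argument.
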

\begin{proof}
Let $A \in \mathfrak{g}_Q$. To prove part (i) we observe that we have $QA^n = ((-A)^n)^TQ$ for all $n \in \mathbb{N}$, and thus 
    \begin{align*}
       Q\mathcal{D}_s(A) = Q \sum_{i= 0}^s a_i A^i = \sum_{i= 0}^s a_i QA^i = \sum_{i= 0}^s a_i ((- A)^i)^TQ = \mathcal{D}_s(-A)^TQ.
    \end{align*}
    Similarly, we also have $Q\mathcal{D}_s(-A) = \mathcal{D}_s(A)^TQ$. Thus, if $\mathcal{D}_s(A)$ is non-singular, i.e.\ $\mathcal{R}_s(A)$ is defined, noting that $\mathcal{D}_s(-A)^{-1}$ and $\mathcal{D}_s(A)$ commute, these two equalities imply $B^TQB = Q$ for $B = \mathcal{R}_s(A) = \mathcal{D}_s(-A)^{-1}\mathcal{D}_s(A)$.

To prove part (ii) note that by Lemma~\ref{lem:lie_algebra_spec} the spectrum of $A$ is purely imaginary, whereas by Proposition~\ref{prop:poles} the poles of $\mathcal{R}_s$ lie in the open right half plane. So the two sets have empty intersection.
\end{proof}

Let us remark that the proof shows that part (i) of Theorem~\ref{th:LieGroupAlgebraPade} holds for general rational functions of the form $\mathcal{P}(z)/\mathcal{P}(-z)$ with $\mathcal{P}$ being a polynomial with real coefficients, and part (ii) then holds if the zeros of $\mathcal{P}(-z)$ lie off the imaginary axis.  

\begin{theorem}
\label{th:PadeLip}
    Let $Q > 0$. The diagonal Pad\'e approximations $\mathcal{R}_s(A)$ are (globally) Lipschitz continuous on $\mathfrak{g}_Q$. With respect to the matrix norm $\| \cdot \|_Q$  associated with the $Q$-norm $\langle x,x\rangle_Q^{1/2}$ on $\mathbb{R}^{n}$, 
    a Lipschitz constant for each $s$ is 
    \begin{equation} \label{eq:lip_constant}
    \kappa_s = \sum_{j=1}^s \frac{|\omega_j|}{\Re(\tau_j)^2}.
    \end{equation}
\end{theorem}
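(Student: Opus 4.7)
The plan is to linearize the difference $\mathcal{R}_s(A)-\mathcal{R}_s(B)$ via the partial fraction decomposition \eqref{eq:extDiagonalPadé} together with the standard resolvent identity
$$
(A-\tau_j I)^{-1} - (B-\tau_j I)^{-1} \;=\; (A-\tau_j I)^{-1}(B-A)(B-\tau_j I)^{-1},
$$
which yields
$$
\mathcal{R}_s(A) - \mathcal{R}_s(B) \;=\; \sum_{j=1}^{s} \omega_j\,(A-\tau_j I)^{-1}(B-A)(B-\tau_j I)^{-1}.
$$
Taking $\|\cdot\|_Q$ and applying the triangle inequality together with submultiplicativity reduces the theorem to a uniform bound on the $Q$-norm of the individual resolvents $(A-\tau_j I)^{-1}$ valid for all $A\in \mathfrak{g}_Q$.

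The key lemma I would prove is: for every $A\in \mathfrak{g}_Q$ and every $\tau\in \mathbb{C}$ with $\Re(\tau)>0$,
$$
\bigl\|(A-\tau I)^{-1}\bigr\|_Q \;\leq\; \frac{1}{\Re(\tau)}.
$$
To establish this I would extend $A$ to act on $\mathbb{C}^{n}$ equipped with the Hermitian form $\langle x,y\rangle_Q := y^{*}Qx$. Since $A$ is $Q$-anti-selfadjoint, the operator $iA$ is $Q$-selfadjoint, and the spectral theorem for selfadjoint operators in a complex Hermitian inner product produces a $Q$-orthonormal eigenbasis of $A$ with purely imaginary eigenvalues $i\lambda_k$, $\lambda_k\in\mathbb{R}$ (consistent with Lemma~\ref{lem:lie_algebra_spec}). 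In this basis $(A-\tau I)^{-1}$ is diagonal with entries $(i\lambda_k-\tau)^{-1}$, and the bound is immediate from
$$
|i\lambda_k-\tau|^2 \;=\; \Re(\tau)^2 + (\lambda_k - \Im(\tau))^2 \;\geq\; \Re(\tau)^2.
$$
The passage between real and complex operator $Q$-norms is not an issue, since $\|M\|_Q = \|Q^{1/2} M Q^{-1/2}\|_2$ and the Euclidean $2$-norm of a real matrix agrees with that of its complex extension.

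Substituting the resolvent bound into the estimate above gives
$$
\|\mathcal{R}_s(A) - \mathcal{R}_s(B)\|_Q \;\leq\; \sum_{j=1}^{s} \frac{|\omega_j|}{\Re(\tau_j)^2}\, \|A-B\|_Q \;=\; \kappa_s\, \|A-B\|_Q,
$$
which is the claimed Lipschitz estimate; Proposition~\ref{prop:poles} ensures that $\Re(\tau_j)>0$ so that $\kappa_s$ is finite. I expect the main obstacle to be the resolvent bound itself: because some $\tau_j$ are genuinely complex, the matrices $A-\tau_j I$ are not real, so one cannot work on $\mathbb{R}^{n}$ alone, and a clean $Q$-norm estimate really requires the $Q$-orthonormal eigendecomposition of $A$ on $\mathbb{C}^{n}$ rather than only the defining relation $QA+A^{T}Q=0$ used formally. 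Everything else is bookkeeping with the partial fraction representation.
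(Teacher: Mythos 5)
Your proposal is correct and follows essentially the same route as the paper's proof: the partial fraction decomposition combined with the resolvent identity, reduced to the bound $\|(A-\tau_j I)^{-1}\|_Q \leq 1/\Re(\tau_j)$, which is obtained exactly as you do it, via a $Q$-orthonormal eigenbasis of the skew-selfadjoint operator $A$ over $\mathbb{C}^n$. Your write-up is if anything slightly more explicit than the paper's on the complexification and the eigenvalue modulus computation.
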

\begin{proof} Using the partial fractional decomposition \eqref{eq:pfd}, we have for $A,B \in \mathfrak{g}_Q$:
\begin{align*}
\mathcal{R}_s(A)-\mathcal{R}_s(B) 
&= \sum_{j=1}^s \omega_j \left( (A-\tau_j I)^{-1} -(B-\tau_j I)^{-1} \right) \\ 
&=  \sum_{j=1}^s \omega_j (B-\tau_j I)^{-1} \left( (B-\tau_j I) - (A-\tau_j I) \right) (A-\tau_j I)^{-1}  \\
&= \sum_{j=1}^s \omega_j (B-\tau_j I)^{-1} \left( B- A \right) (A-\tau_j I)^{-1},
\end{align*}
which implies
\[
\|\mathcal{R}_s(A)-\mathcal{R}_s(B)\|_Q \leq 
\sum_{j=1}^s |\omega_j| \, \| (A-\tau_j I)^{-1} \|_Q \,\|(B-\tau_j I)^{-1}\|_Q \cdot \|A- B\|_Q. 
\]
The proof will be finished once we have shown $\|(A-\tau_j I)^{-1}\|_Q, \|(B-\tau_j I)^{-1}\|_Q \leq \frac{1}{\Re(\tau_j)}$. 
This can be seen as follows (we just consider $A$): Since $A$ is skew-selfadjoint w.r.t.\ the $Q$-inner product, there is a basis of $\mathbb{C}^{n}$ of $Q$-orthogonal eigenvectors of $A$, and the same holds for 
$A-\tau_j I$ and its inverse. This implies that the matrix norm is given by the maximum modulus of the eigenvalues. Since $\spec(A) \subset i\mathbb{R}$, the smallest modulus of an eigenvalue of $A-\tau_j I$ is bounded from below by $\Re(\tau_j)$, and since the eigenvalues of  $(A-\tau_j I)^{-1}$ are the inverses of those of  $A-\tau_j I$, the largest modulus of an eigenvalue of $(A-\tau_j I)^{-1}$ is bounded from above by $1/\Re(\tau_j)$. This  finishes the proof.    
\end{proof}

\begin{corollary}  \label{cor:Lipschitz} For $Q >0$, the first diagonal Pad\'e approximation $\mathcal{R}_1$ is Lipschitz-continuous on $\mathfrak{g}_Q$ with Lipschitz constant $\kappa_1 = 1$.
\end{corollary}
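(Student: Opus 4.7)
The plan is to simply compute the pole and residue for $s=1$ and then substitute into the Lipschitz-constant formula \eqref{eq:lip_constant} established in Theorem~\ref{th:PadeLip}. There is no hard part; the result is a one-line numerical verification of the general bound.

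First I would write out $\mathcal{D}_1$ from the definition \eqref{eq:diagonalePadé}. One obtains $\mathcal{D}_1(z) = 1 + z/2$, so
\[
\mathcal{R}_1(z) = \frac{\mathcal{D}_1(z)}{\mathcal{D}_1(-z)} = \frac{1 + z/2}{1 - z/2} = \frac{2+z}{2-z}.
\]
Thus the unique pole of $\mathcal{R}_1$ is $\tau_1 = 2$, which lies in the open right half-plane as predicted by Proposition~\ref{prop:poles}.

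Next I would read off the residue $\omega_1$ from the partial fractional decomposition \eqref{eq:pfd}. Either by the rewriting
\[
\frac{2+z}{2-z} = -\frac{(z-2)+4}{z-2} = -1 - \frac{4}{z-2},
\]
or by computing $\omega_1 = \lim_{z\to 2}(z-2)\mathcal{R}_1(z) = -4$, one matches the form $\mathcal{R}_1(z) = (-1)^1 + \omega_1/(z-\tau_1)$ with $\omega_1 = -4$, $\tau_1 = 2$.

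Finally I would plug these values into the Lipschitz constant \eqref{eq:lip_constant} from Theorem~\ref{th:PadeLip} to obtain
\[
\kappa_1 = \frac{|\omega_1|}{\Re(\tau_1)^2} = \frac{4}{4} = 1,
\]
which is exactly the claim. Hence $\mathcal{R}_1$ is Lipschitz continuous on $\mathfrak{g}_Q$ with respect to $\|\cdot\|_Q$ with constant $1$, concluding the proof.
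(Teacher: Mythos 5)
Your proposal is correct and follows exactly the same route as the paper: compute $\mathcal{D}_1(z)=1+z/2$, read off $\tau_1=2$ and $\omega_1=-4$ from the partial fraction form $\mathcal{R}_1(z)=-1-\tfrac{4}{z-2}$, and substitute into \eqref{eq:lip_constant} to get $\kappa_1=4/2^2=1$. Nothing to add.
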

\begin{proof} We have $\mathcal{R}_1(z) = \frac{1+0.5z}{1-0.5z} = -1 - \frac{4}{z-2}$ and thus $\omega_1=-4$ and $\Re(\tau_1) = 2$, which gives $\kappa_1 = 1$ in \eqref{eq:lip_constant}.
\end{proof}

Let us mention at this point that $\mathcal{R}_1$ is related to the Cayley transform
\begin{align} \label{def:cayleyTrans}
\mathcal{C}(A) = (I - A)^{-1}(I + A)
\end{align}
which is defined if $1\not \in \spec(A)$, since we have
 $\mathcal{R}_1(A) = \mathcal{C}(\tfrac{1}{2}A)$. Corollary~\ref{cor:Lipschitz} shows that for $Q >0$, the Cayley transform is Lipschitz continuous on $\mathfrak{g}_Q$ with Lipschitz constant 2.

\section{Krylov-subspace iteration schemes} \label{sec:krylov}
\label{krylovsubSec}

The stability function of the Gauss collocation integrator of order $2s$ is the diagonal Padé approximation of the exponential function of degree $s$; see \cite{hairer_siffODEquation}. This means that for the linear ODE \eqref{eq:energypresSysLinQ}, the Gauss collocation approximation $y_1$ to $y(h)$, the solution at time $h$, is defined as 
\begin{align} \label{eq:refGauss}
y_1 = \mathcal{R}_s(hJQ)y_0 = \mathcal{D}_s(-hJQ)^{-1}\mathcal{D}_s(hJQ)y_0,
\end{align}
which, using the partial fractional decomposition \eqref{eq:pfd}, is equivalent to computing 
\begin{align}
    \label{eq:refGauss_pdf}
    y_1 = 
    (-1)^sy_0 + \sum_{j = 1}^{s} \omega_j (hJQ - \tau_jI)^{-1}y_0.
\end{align}
Gauss collocation integrators are particularly appropriate in our context, since they conserve the $Q$-norm (the ``energy''), just as the exact solution of the Poisson system \eqref{eq:energypresSysLinQ} does. Indeed, since $hJQ \in \mathfrak{g}_Q$, Theorem~\ref{th:LieGroupAlgebraPade} gives $\mathcal{R}_s(hJQ) \in \mathcal{G}_Q$ and thus the Hamiltonian fulfills
\[
\mathcal{H}(y_1) = \tfrac{1}{2}y_1^TQy_1 = \tfrac{1}{2}y_0^T\mathcal{R}_s(hJQ)^TQ\mathcal{R}_s(hJQ)y_0 = \tfrac{1}{2}y_0^TQy_0 = \mathcal{H}(y_0).
\]

If the system dimension is large, computing $y_1$ exactly via \eqref{eq:refGauss} or \eqref{eq:refGauss_pdf} is computationally costly because it involves solving large linear systems. Our goal is now to devise efficient schemes which approximate the solution $y_1$ of the linear systems iteratively. If these iterations can be stopped relatively early, e.g., when an accuracy compatible with the order of the integrator is achieved, this will result in relatively low cost. It is important then that the $Q$-norm is also conserved on the level of the individual iterates of these linear solvers, thus preserving the prominent invariant of the underlying ODE. In the following we derive a solver fulfilling all these requirements within the Krylov subspace framework. For a good overview of Krylov subspace theory, we refer to \cite{KrylovPrinciplesAnalysis,SogabeKrylovSubspaceMethods}.      

\subsection{Krylov subspaces and the Arnoldi approximation}
 \label{subsec:KrylovArnoldi}
 
For given $A \in \mathbb{R}^{n \times  n}$ and $v \in \mathbb{R}^n$, $v \neq 0$, the $k$-th Krylov subspace $\mathcal{K}_k(A, v)$ is given by 
\[
\mathcal{K}_k(A, v) = \text{span} \{ v, Av, \dots, A^{k-1}v\}.
\]
For $Q \in \mathbb{R}^{n \times n}$, $Q>0$,
we iteratively can construct a $Q$-orthonormal basis of the Krylov subspace via the $Q$-Arnoldi process as given in Algorithm \ref{alg:arnoldi_P}. Putting $V_k = [v_1| \cdots | v_k] \in \mathbb{R}^{n\times k}$ and $H_k\in \mathbb{R}^{k \times k}$ the upper Hessenberg matrix with the orthogonalization coefficients $h_{ij}$, 
the $Q$-Arnoldi process can be summarized as yielding the Arnoldi relation
\begin{equation} \label{eq:Arnoldi_relation}
AV_k = V_kH_k + h_{k+1,k} v_{k+1} e_k^T, \qquad V_k^TQV_k = I,
\end{equation}
where $e_k$ denotes the $k$-th canonical unit vector in $\mathbb{R}^k$.
An immediate consequence of the $Q$-orthonormality of $V_k$ is that for any $\tau \in \mathbb{C}$ we have
\begin{equation} \label{eq:projected_Arnoldi_relation}
V_k^TQ(A-\tau I)V_k = H_k-\tau I. 
\end{equation}

\begin{algorithm}[!t]
    \label{alg:arnoldi_P}
    \TitleOfAlgo{$Q$-Arnoldi process} 
    \DontPrintSemicolon
    $v_1 \gets \frac{v}{\lVert v \rVert_Q}$\;
    \For{$k = 1,2, \dots$}
    {
        $w_k \gets Av_k$\;
        \For{$i = 1, \dots, k$}
        {
            $h_{i,k} \gets \langle v_i, w_k \rangle_Q$\;
            $w_k \gets w_k - h_{i,k}v_i$\;
        }
        $h_{k+1,k} \gets \lVert w_k \rVert_Q$. If $h_{k+1,k} = 0$, Stop\;
        $v_{k+1} = \frac{w_k}{h_{k+1,k}}$\;
    }
\DontPrintSemicolon
\end{algorithm}

A common approach to approximate the solution of a linear system $(A-\tau I)x = v$ with iterates $x_k(\tau) = V_k\xi_k(\tau)$ taken from $\mathcal{K}_k(A, v)$  is to use a Galerkin condition which requires that the residual $v-(A-\tau I)x_k(\tau)$ is $Q$-orthogonal to $\mathcal{K}_k(A, v)$.  
Using the Arnoldi relation \eqref{eq:projected_Arnoldi_relation}, the Galerkin condition results in 
\begin{eqnarray*}
v-(A-\tau I)V_k\xi_k(\tau) \perp_Q \mathcal{K}_k(A,v) 
& \Longleftrightarrow & 
V_k^TQ(v-(A-\tau I)V_k\xi_k(\tau)) = 0 \\
& \Longleftrightarrow & V_k^TQv - (H_k -\tau I) \xi_k(\tau) = 0.
\end{eqnarray*}
Since $V_k^TQv = e_1\|v\|_Q$ with $e_1$ the first canonical unit vector in $\mathbb{R}^k$, this gives 
\[
x_k(\tau) =  V_k(H_k-\tau I)^{-1}  e_1 \lVert v \rVert_Q.
\]

Using this approach for each term in the action of the partial fractional decomposition \eqref{eq:extDiagonalPadé} (Pad\'e approximation) on a vector $v$, we obtain what we might call the {\em $Q$-Arnoldi approximation} of $\mathcal{R}_s(A)v$ given as
\begin{align}
x_k 
    &= \underbrace{(-1)^sv}_{= (-1)^s V_k e_1 \lVert v \rVert_Q} + \sum_{j = 1}^s  V_k\omega_j (H_k - \tau_jI)^{-1}e_1 \lVert v \rVert_Q 
    \label{eq:Arnoldi_approx_pfd} \\
    &= V_k \mathcal{R}_s(H_k)e_1 \lVert v \rVert_Q \nonumber \\
    & =   V_k \mathcal{D}_{s}(-H_k)^{-1}\mathcal{D}_{s}(H_k)e_1 \lVert v \rVert_Q .
    \label{eq:Arnoldi_approx_rational}
\end{align}
For the standard inner product ($Q = I$), this is a special case of the Arnoldi matrix function approximation for $f(A)v$, defined as $V_kf(H_k) e_1 \|v\|$; see, e.g., \cite{FrSi06,matrixFunctBook,analysisProjMethod}.

For $A \in \mathfrak{g}_Q$, we observe the following two crucial properties of the $Q$-Arnoldi process and the $Q$-Arnoldi approximation in \eqref{eq:Arnoldi_approx_pfd}, \eqref{eq:Arnoldi_approx_rational}; see also \cite{diab2022flexibleshortrecurrencekrylov,GLMS2022}.

\begin{theorem} \label{prop:Arnoldi_iterates_invariance} Let $A \in \mathfrak{g}_Q$, $Q>0$. Then
\begin{itemize}
\item[(i)] The upper Hessenberg matrix $H_k$ from the $Q$-Arnoldi process (Algorithm~\ref{alg:arnoldi_P}) is tridiagonal and skew-symmetric, $H_k^T = - H_k$.
\item[(ii)] The $Q$-Arnoldi approximation $x_k$  from \eqref{eq:Arnoldi_approx_pfd}, or, equivalently, \eqref{eq:Arnoldi_approx_rational} is defined, i.e.\ all matrices $H_k-\tau_j I$ are non-singular, and it satisfies 
\[
\|x_k\|_Q = \|v\|_Q \text{ for all $k$.}
\]
\end{itemize}
\end{theorem}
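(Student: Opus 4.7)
The plan is to handle (i) first, since (ii) depends on it, and to exploit the general Padé invariance theory from Theorem~\ref{th:LieGroupAlgebraPade} applied to the small matrix $H_k$ with the standard inner product (i.e.\ with $Q=I$).

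For part (i), I would start from the Arnoldi relation \eqref{eq:Arnoldi_relation} and left-multiply by $V_k^T Q$. Using $V_k^T Q V_k = I$ and $V_k^T Q v_{k+1} = 0$ (by $Q$-orthonormality), I obtain the clean identity
\[
H_k = V_k^T Q A V_k.
\]
Now the assumption $A \in \mathfrak{g}_Q$ means $QA = -A^T Q$, so
\[
H_k^T = V_k^T A^T Q V_k = -V_k^T Q A V_k = -H_k,
\]
i.e.\ $H_k$ is skew-symmetric. Since $H_k$ is simultaneously upper Hessenberg (produced by the Arnoldi process) and skew-symmetric, its subdiagonal entries equal the negatives of its superdiagonal entries and everything above the first superdiagonal vanishes, so $H_k$ is tridiagonal.

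For part (ii), well-definedness follows immediately: a real skew-symmetric matrix has spectrum in $i\mathbb{R}$, so $H_k \in \mathfrak{g}_I$ and Lemma~\ref{lem:lie_algebra_spec} applies; meanwhile by Proposition~\ref{prop:poles} every pole $\tau_j$ satisfies $\Re(\tau_j)>0$. Hence $\spec(H_k) \cap \{\tau_j\} = \emptyset$ and every $H_k-\tau_j I$ in \eqref{eq:Arnoldi_approx_pfd} is invertible, so the approximation $x_k$ (equivalently $V_k\mathcal{R}_s(H_k) e_1 \|v\|_Q$) is defined. For the norm identity, I would apply Theorem~\ref{th:LieGroupAlgebraPade}(i) with $Q$ replaced by $I$: since $H_k \in \mathfrak{g}_I$, we get $\mathcal{R}_s(H_k) \in \mathcal{G}_I$, i.e.\ $\mathcal{R}_s(H_k)$ is orthogonal in the usual sense. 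Combining this with $V_k^T Q V_k = I$,
\[
\|x_k\|_Q^2 = \|v\|_Q^2\, e_1^T \mathcal{R}_s(H_k)^T V_k^T Q V_k \mathcal{R}_s(H_k) e_1 = \|v\|_Q^2\, e_1^T \mathcal{R}_s(H_k)^T \mathcal{R}_s(H_k) e_1 = \|v\|_Q^2.
\]

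There is really no hard obstacle here; the whole point is that the $Q$-Arnoldi process transports the structure $A \in \mathfrak{g}_Q$ down to the small matrix $H_k \in \mathfrak{g}_I$, at which point the previously established invariance properties of diagonal Padé approximants do all the work. The only place where one needs to be slightly careful is verifying that $V_k^T Q v_{k+1} e_k^T$ vanishes when forming $V_k^TQAV_k$ and, for part (ii), being explicit that the Padé result is used with a \emph{different} inner product ($I$ rather than $Q$) on the Krylov coordinate space.
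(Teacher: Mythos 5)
Your proposal is correct and follows essentially the same route as the paper: both derive $H_k = V_k^TQAV_k$ from the Arnoldi relation, use $QA=-A^TQ$ to get skew-symmetry (hence tridiagonality), invoke the purely imaginary spectrum of $H_k$ versus the right-half-plane poles for well-definedness, and apply Theorem~\ref{th:LieGroupAlgebraPade}(i) with the identity inner product to conclude $\mathcal{R}_s(H_k)\in\mathcal{G}_I$ and hence $\|x_k\|_Q=\|v\|_Q$. Your explicit remark that $V_k^TQv_{k+1}=0$ kills the rank-one term is a small clarification the paper leaves implicit, but the argument is the same.
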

\begin{proof} We have $H_k = V_k^TQAV_k$
by \eqref{eq:Arnoldi_relation}, and $QA = -A^TQ$, since $A \in \mathfrak{g}_Q$. Thus, $H_k = V_k^TQAV_k = -V_k^TA^TQV_k = -H_k^T$, which proves that $H_k$ is not only upper Hessenberg, but even tridiagonal, and that it is, in addition, skew-symmetric. So all eigenvalues of $H_k$ are purely imaginary, and those of $H_k - \tau_j I$ all have real part $-\tau_j <0$. Therefore, $H_k - \tau_j I$ is non-singular. And since $H_k \in \mathfrak{g}_I$, Theorem~\ref{th:LieGroupAlgebraPade} (i) shows that $\mathcal{R}_s(H_k) \in \mathcal{G}_I$, i.e.\ $ \mathcal{R}_s(H_k)^T \mathcal{R}_s(H_k) =I$. Applied to the Arnoldi approximation, this gives
\[
\|x_k\|_Q^2 \, = \, \left( V_k \mathcal{R}_{s}(H_k) e_1 \lVert v \rVert_Q\right)^T Q  V_k \mathcal{R}_{s}(H_k)e_1 \lVert v \rVert_Q \, =  \, 
e_1^T\mathcal{R}_{s}(H_k)^TV_k^TQV_k\mathcal{R}_{s}(H_k)e_1 \lVert v\rVert_Q^2 
\, = \, 
\lVert v \rVert_Q^2.
\]
\end{proof}
Now, consider that we approximate the result $y_1$ of the degree $2s$ Gauss collocation integrator for the Poisson system given in \eqref{eq:refGauss},
\[
y_1 = \mathcal{R}_s(hJQ)y_0,
\]
by the $k$-th $Q$-Arnoldi approximation
\[
x_k = V_k \mathcal{R}_{s}(H_k) e_1 \lVert y_0 \rVert_Q \in \mathcal{K}_k(hJQ,y_0), \qquad x_k \rightarrow y_1 \text{ as } k\rightarrow n
\]
Then part (ii) of Theorem~\ref{prop:Arnoldi_iterates_invariance} yields that these $Q$-Arnoldi iterates satisfy the sought-after invariance
\[
\mathcal{H}(x_k) = \frac{1}{2}x_k^TQx_k = \frac{1}{2}y_0^TQy_0 = \mathcal{H}(y_0)
\]
for any iteration $k$.  Part (i) shows that we can compute the $Q$-Arnoldi iterates efficiently, because the $Q$-Arnoldi process,  Algorithm~\ref{alg:arnoldi_P}, actually reduces to its Lanczos variant, where in the inner loop (over $i$) we only have to compute $h_{k+1,k}$, since $h_{k-1,k} = -h_{k,k-1}$ and $h_{i,k} = 0$ for all other $i$.

The nominator-denominator based representation \eqref{eq:Arnoldi_approx_rational} for $x_k$ and the mathematically equivalent partial fractional decomposition based representation  \eqref{eq:Arnoldi_approx_pfd} yield two algorithmically different approaches to compute $x_k$. We now present and discuss them as variants V.1 and V.2. The common part for both variants is to run the $Q$-Arnoldi process, thus obtaining $H_k$ and $V_k$. 
\begin{enumerate}[\bfseries{V}.1]
    \item \label{enum:a} We evaluate the action of the nominator and of the denominator polynomial of $\mathcal{R}_s$ on a vector as such, i.e.\ 
    \begin{eqnarray*} 
    & &\text{compute } \eta_k = \mathcal{D}_s(H_k)e_1 \text{ (we may use Horner's scheme)}, \\
    & &\text{compute the matrix } D_k = \mathcal{D}_s(-H_k) \text{ (we may use Horner's scheme)}, \\
    & &\text{solve } D_k\xi_k = \eta_k \text{ and put } x_k = V_k \xi_k \|y_0\|_Q.
    \end{eqnarray*}
    This approach requires only one linear solve of a system of size $k$ with matrix $D_k$. The matrix $D_k$ must be computed explicitly, and it has semi-bandwidth $s$, a fact that we can exploit when computing its (pivoted) LU-factorization. 
    \item \label{enum:b} We solve the $s$ linear systems
    \[
    (H_k-\tau_j I)\zeta_j = e_1, \enspace j=1,\ldots s,
    \]
    and then put $x_k = (-1)^s y_0 + V_k \big( \sum_{j=1}^s \omega_j \zeta_j\big) \|y_0\|_Q$. This requires $s$ tridiagonal linear systems of size $k$ to be solved, and, as opposed to V.\ref{enum:a}, it may require complex arithmetic. Savings are possible by using the fact that poles $\tau_j$ and weights $\omega_j$ come in complex conjugate pairs. This reduces the computation to solving only $\lceil \tfrac{s}{2} \rceil$ systems, as we will discuss further below. 
\end{enumerate}

\subsection{Gauss integrators up to order six}
\label{subsec:gausstosix}

We end this section giving some more details for the Gauss integrators of order two (the midpoint rule), four  and six. 

The midpoint rule ($s=1$) prescribes the approximation of the linear ODE~\eqref{eq:energypresSysLinQ} via
\[
\mathcal{R}_1(z) = \frac{1+\tfrac{1}{2}z}{1-\tfrac{1}{2}z} = -1 -\frac{4}{z-2},
\]
cf.\ \eqref{eq:refGauss} and \eqref{eq:refGauss_pdf}. This means that in the $Q$-Arnoldi process, variant V.\ref{enum:a} we compute
\[
x_k = V_k\xi_k \lVert y_0 \rVert_Q, \enspace \text{where } (I - \tfrac{1}{2}H_k)\xi_k = (I + \tfrac{1}{2}H_k)e_1=\eta_1,
\]
whereas V.\ref{enum:b} obtains $x_k$ as
\[
x_k = -y_0 -4V_k\zeta_1 \lVert y_0 \rVert_Q, \enspace \text{where } (H_k - 2I)\zeta_1 = e_1 .
\]
With respect to computational cost, the difference in both versions is marginal: none needs complex arithmetic, both need to solve one tridiagonal system. 

For the fourth order Gauss integrator ($s=2$), we have
\[
\mathcal{R}_2(z) = \frac{1+\tfrac{1}{2}+\tfrac{1}{12}z^2}{1-\tfrac{1}{2}z+\tfrac{1}{12}z^2} = 1 + \frac{6+6\sqrt{3}i}{z-(3-\sqrt{3}i)} + \frac{6-6\sqrt{3}i}{z-(3+\sqrt{3}i)}.
\]
In variant V.1 we compute the vector $\eta_k = (I+\tfrac{1}{2}H_k+ \tfrac{1}{12}H_k^2)e_1$ and the matrix $D_k = I- \tfrac{1}{2}H_k+ \tfrac{1}{12}H_k^2$,  which is pentadiagonal, and we then solve a linear system 
\[
D_k \xi_k =  \eta_k.
\] 
In variant V.\ref{enum:b}, we, in principle, have to deal with two tridiagonal linear systems
\[
\left(H_k - (3-\sqrt{3}i)I\right)\zeta_1 = e_1, \qquad \left(H_k - (3+\sqrt{3}i)I\right)\zeta_2 = e_1.
\]
But since the two summands in $V_k \big( (6+6\sqrt{3}i)\zeta_1 + (6-6\sqrt{3}i)\zeta_2\big)\|y_0\|_Q$ are complex conjugates of each other, we get
\[
x_k = y_0 + 2V_k \Re[(6+6\sqrt{3}i)\zeta_1] \|y_0\|_Q.
\]
Hence, we actually must solve only one (complex) tridiagonal linear system.

The Pad\'e approximation corresponding to the sixth order Gauss integrator ($s=3$) is given by
\[
\mathcal{R}_3(z) = \frac{1+\tfrac{1}{2}z+\tfrac{1}{10}z^2+\tfrac{1}{120}z^3}{1-\tfrac{1}{2}z+\tfrac{1}{10}z^2-\tfrac{1}{120}z^3} = -1 + \frac{\omega_1}{z-\tau_1} + 
 \frac{\overline{\omega}_1}{z-\overline{\tau}_1} + \frac{\omega_2}{z-\tau_2}, 
\]
where the poles and coefficients are numerically determined as
\begin{align*}
\tau_1 &\approx 3.67781464537391 - 3.50876191956744i, 
\qquad &&\tau_2 \approx 4.64437070925217, \\
\omega_1 &\approx 16.6012701235744 - 20.5831842793869i, 
\qquad &&\omega_2 \approx -57.2025402471486.
\end{align*}
Variant V.1 relies on the explicit computation of the matrix $D_k = I - \tfrac{1}{2}H_k + \tfrac{1}{10}H_k^2 - \tfrac{1}{120}H_k^3$, which has semi-bandwidth 3, and the solving of a linear system with $D_k$. 
For variant V.2 we can again make use of the fact that we have two conjugate complex vectors which contribute additively to $x_k$, so we need to solve just two tridiagonal systems of which the one using $\tau_1$  is complex and the other (belonging to $\tau_2$) is real. In contrast to V.1, the quality of $x_k$ is here influenced by the numerical accuracy of the underlying poles and coefficients.

\subsection{Arnoldi approximation of the exponential}

Since the analytical solution is given by $y(h) = \exp(hJQ)y_0$, one can also use the Arnoldi approximation directly for the matrix exponential and obtain the approximations
\[
x_k = V_k\exp(H_k)e_1 \|y_0\|_Q,
\]
with $H_k$ and $V_k$ from the $Q$-Arnoldi process (Algorithm~\ref{alg:arnoldi_P}) w.r.t.\ $\mathcal{K}_k(hJQ,y_0)$. Since $H_k$ is skew-symmetric, $\exp(H_k)$ is orthogonal, and thus, just as with the Pad\'e approximations, all the iterates $x_k$ preserve the $Q$-norm. There are two drawbacks which make us prefer the approach using the Pad\'e approximations: First, it is less clear when this iteration ought to be stopped and second, the approach requires to store all Arnoldi vectors $v_\ell$ (or to recurse to a two-pass strategy). This is in contrast to how we can implement the Arnoldi approximations in variant V.2, where the short-term recurrence in the $Q$-Arnoldi process can be used to construct a short-term recurrence update for the iterates $x_k$ in a similar way than is done in the SYMMLQ method for linear systems.

\section{Numerical results for a linear Poisson system}
\label{sec:num_lin}

We study the performance of the $Q$-Arnoldi iteration scheme for the Gauss integrators in comparison to the classical GMRES method. As benchmark example we consider a single mass-spring chain consisting of $N \in \mathbb{N}$ harmonic oscillators without input and dissipation elements as treated in \cite{GPBS2012}. This leads to a linear Poisson system $\dot{y} = JQy$ with
\begin{align*}
\renewcommand{\arraystretch}{0.5}
   J = \begin{bsmallmatrix}
         0  & 1 &\\
        -1  &  0 & \phantom{\tfrac{1}{m_1}} \\
             &  & 0 & 1 &  \\
             &  & -1& 0 & \phantom{\tfrac{1}{m_1}}\\
             &  &    &    & 0 &1\\
             &  &    &    & -1&0 & & \phantom{\tfrac{1}{m_1}}\\ 
             &  &    &    &     &  &  & \ddots \\
             &  &    &    &     &            &     \ddots    &  \phantom{\tfrac{1}{m_1}}\\
             &  &    &    &     &   &        &         & 0 & 1 \\
             & & & &  &  &  &  \phantom{\tfrac{1}{m_1}} &-1 & 0
    \end{bsmallmatrix},  \, \, \,
    Q = \begin{bsmallmatrix}
        k_1 &  & -k_1 & \\
         & \tfrac{1}{m_1} &  & \\
        -k_1 &  & k_1 + k_2 &  & -k_2 &  \\ 
                &  &                  &   \tfrac{1}{m_2} &  \\ 
                & &        -k_2           &  &  & \ddots \\
                & &                   &   & \ddots   & & - k_{N-1} &  \\
       & & & \ddots &   & \tfrac{1}{m_{N-1}}  &  &  & \\
       & & & & - k_{N-1}&  & k_{N-1} + k_{N} & \\
       & & & &  &  &  & \tfrac{1}{m_{N}}  
    \end{bsmallmatrix} \in \mathbb{R}^{2N \times 2N}.
\end{align*}
We particularly take equal masses $m_i=0.5$ and spring coefficients $k_i = 124$, $i = 1, \dots, N$, and set $y_0 = e_1$ as initial value. The system dimension scales with $N$, here we use $N=5000$.
  
As standard Krylov subspace method we apply the non-restarted GMRES method \cite{ZOU2023127869} to solve the linear system 
 \begin{equation} \label{eq:GMRES}
 \mathcal{D}_s(-hJQ)y_1 = \mathcal{D}_s(hJQ)y_0 
 \end{equation}
 for the Gauss collocation approximation $y_1$ of the exact solution $y(h)$, with the polynomial $\mathcal{D}_s$ from \eqref{eq:diagonalePadé}.
GMRES uses the Arnoldi process with the standard inner product ($Q = I$ in Algorithm \ref{alg:arnoldi_P})  to span the Krylov subspace $\mathcal{K}_k(\mathcal{D}_{s}(-hJQ),\mathcal{D}_{s}(hJQ)y_0)$. Note that we compute the matrix $\mathcal{D}_{s}(-hJQ)$ explicitly. The $k$-th iterate of GMRES is then obtained from minimizing the residual associated with \eqref{eq:GMRES} in the Euclidian norm over the $k$-th Krylov subspace. In GMRES the Krylov subspaces vary with the Gauss integrator selected, whereas in the $Q$-Arnoldi approximation (QAA) the Krylov subspace to be spanned is the same for all Gauss integrators, i.e.\ $\mathcal{K}_k(hJQ,y_0)$. Moreover, the upper Hessenberg matrix $H_k$ is in general fully occupied in GMRES, whereas it is tridiagonal, skew-symmetric in QAA. The implementation of the algorithms is realized in Python~3.12, using the packages \texttt{numpy} and \texttt{scipy}.

Our $Q$-Arnoldi process variants V.1 and V.2 might differ in computational effort depending on the chosen integrator. However, they provide comparable results in terms of convergence, energy preservation and long-time performance, which is why we only present the outcome of V.1 in the following.

\subsection{Convergence and energy preservation} 
\label{sec:conv}
\begin{figure}[t!]
\label{Fig:oneSteplinear}
  \centering
  \begin{subfigure}
      {\includegraphics[width=0.45\linewidth]{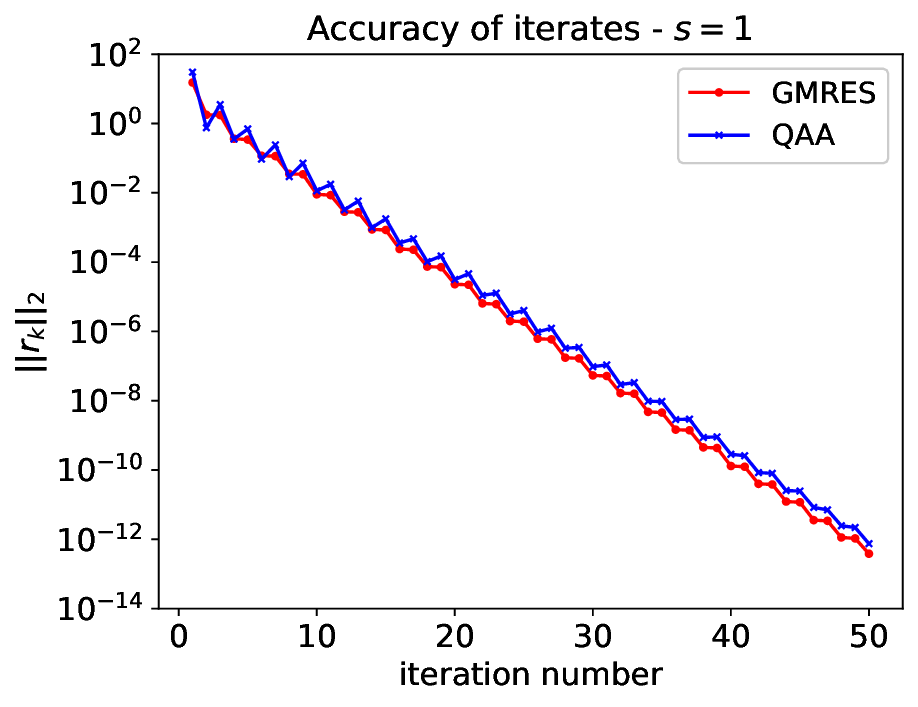}}%
  \end{subfigure}
  \hfill
 \begin{subfigure}
      {\includegraphics[width=0.45\linewidth]{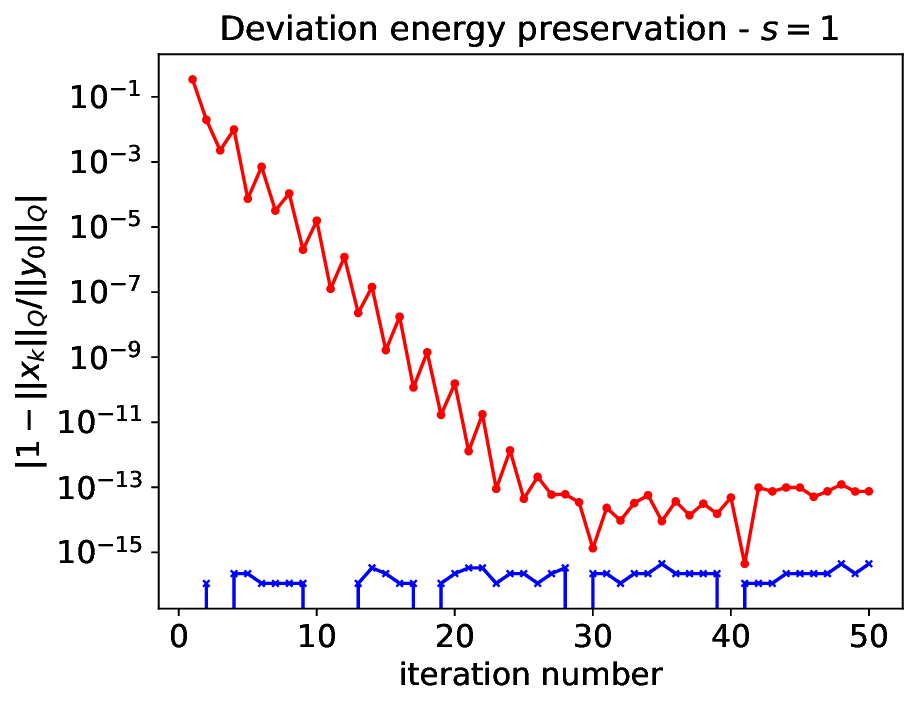}}%
  \end{subfigure}
  \begin{subfigure}
      {\includegraphics[width=0.45\linewidth]{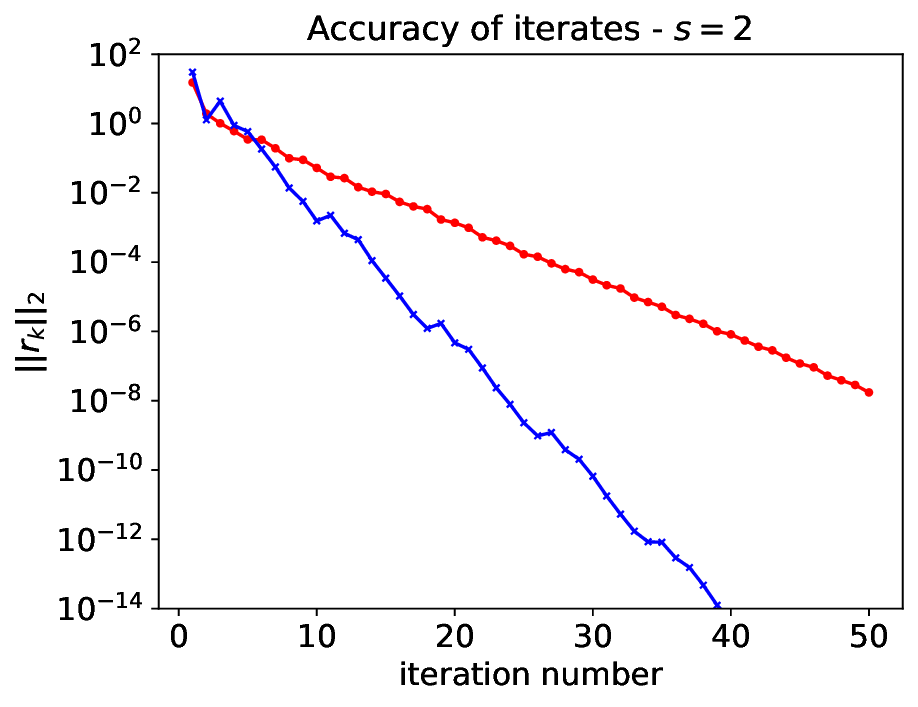}}%
  \end{subfigure}
  \hfill
 \begin{subfigure}
      {\includegraphics[width=0.45\linewidth]{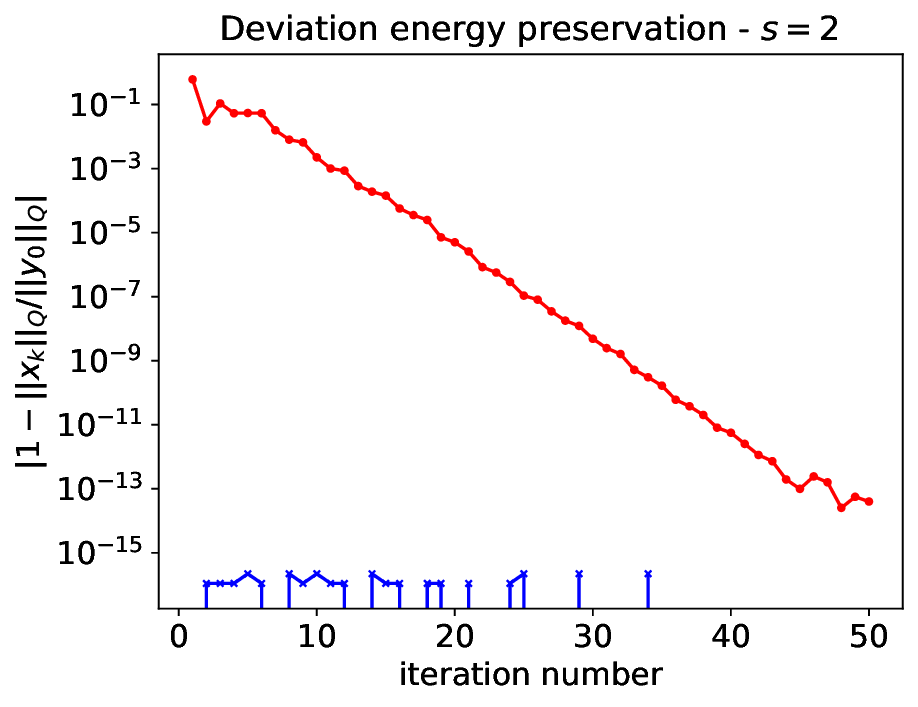}}%
  \end{subfigure}
  \begin{subfigure}
      {\includegraphics[width=0.45\linewidth]{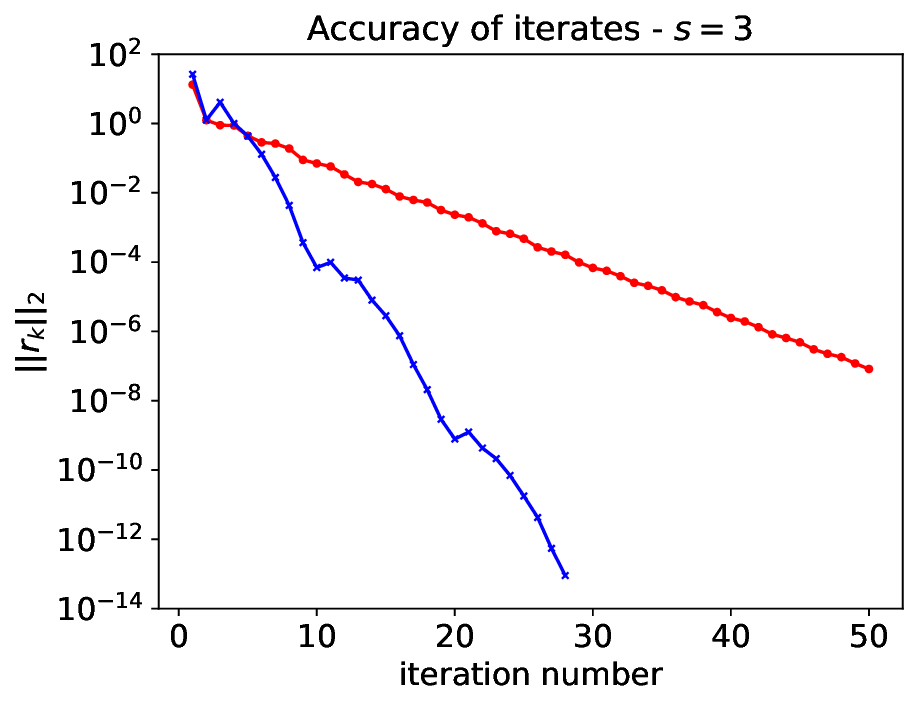}}%
  \end{subfigure}
  \hfill
 \begin{subfigure}
      {\includegraphics[width=0.45\linewidth]{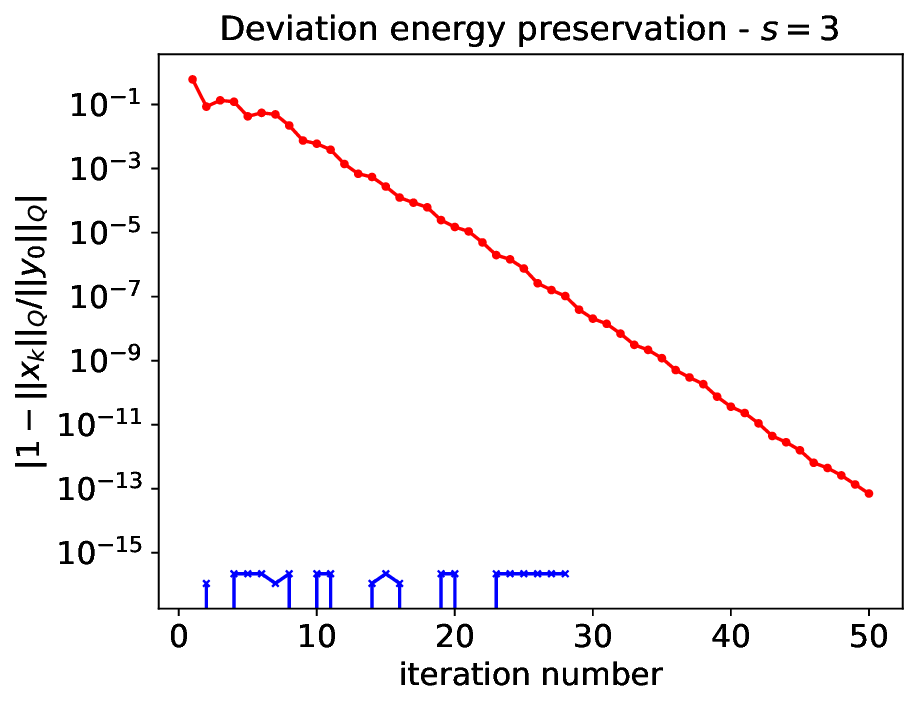}}%
  \end{subfigure}
  \caption{Performance of $Q$-Arnoldi approximation (QAA) and GMRES in the computation of $y_1$ for the mass-spring chain model. Left: Accuracy of iterates $x_k$ w.r.t.\ Euclidian norm of the residual $r_k = \mathcal{D}_{s}(-hJQ)x_k - \mathcal{D}_{s}(hJQ)y_0$. Right: Deviation from energy-conservation, i.e.\ $| 1 - \lVert x_k \rVert_Q/\lVert y_0 \rVert_Q|$. Top to bottom: Gauss integrators  with $s=1$, $s=2$ and $s=3$; $h = 0.1$.}
  \label{fig:midpointConvergence}
\end{figure}  

Focusing on the Gauss collocation approximation $y_1=\mathcal{R}_s(hJQ)y_0$ of the exact solution $y(h)$ for $s=1,2,3$, we compare the iterates $x_k$ of QAA and GMRES. Figure~\ref{Fig:oneSteplinear} illustrates the convergence behavior and the energy-preservation property of the iterates for $h=0.1$.
The accuracy of the iterates is visualized in terms of the Euclidian norm of the residual $r_k = \mathcal{D}_{s}(-hJQ)x_k - \mathcal{D}_{s}(hJQ)y_0$. For the energy preservation the relative error w.r.t.\ the initial value, i.e.\ $| 1 - \lVert x_k \rVert_Q/\lVert y_0 \rVert_Q|$, is shown. We observe a similar convergence behavior of QAA and GMRES for the midpoint rule ($s=1$), while for the other integrators ($s>1$) QAA converges much faster than GMRES. In addition, the higher $s$ the fewer iterations are required in QAA to achieve a certain accuracy. The effort of an iteration step is comparable in both iteration schemes. As designed, all Q-Arnoldi iterates conserve the energy for each $s$, the error is with $\mathcal{O}(10^{-15})$ of the order of the machine accuracy. GMRES, in contrast, is not tailored towards energy preservation. The first iterates violate the conservation of energy with a relative error of $\mathcal{O}(10^{-1})$, this error decreases as $k\rightarrow n$ ($n=2N$). Hence, conservation of energy requires the convergence of the scheme. This remains true even if one changes to a GMRES-variant using a non-standard inner product (see \cite{SS2024}, e.g.), where the $Q$-inner product would be the canonical candidate in our setting. Note that a larger system size $N$ or a larger step size $h$ usually leads to more iterations in both schemes (QAA and GMRES), but does not affect the qualitative behavior just discussed.

\subsection{Long-term performance}

 \begin{figure}[t!]
  \centering
  \begin{subfigure}
      {\includegraphics[width=0.33\linewidth]{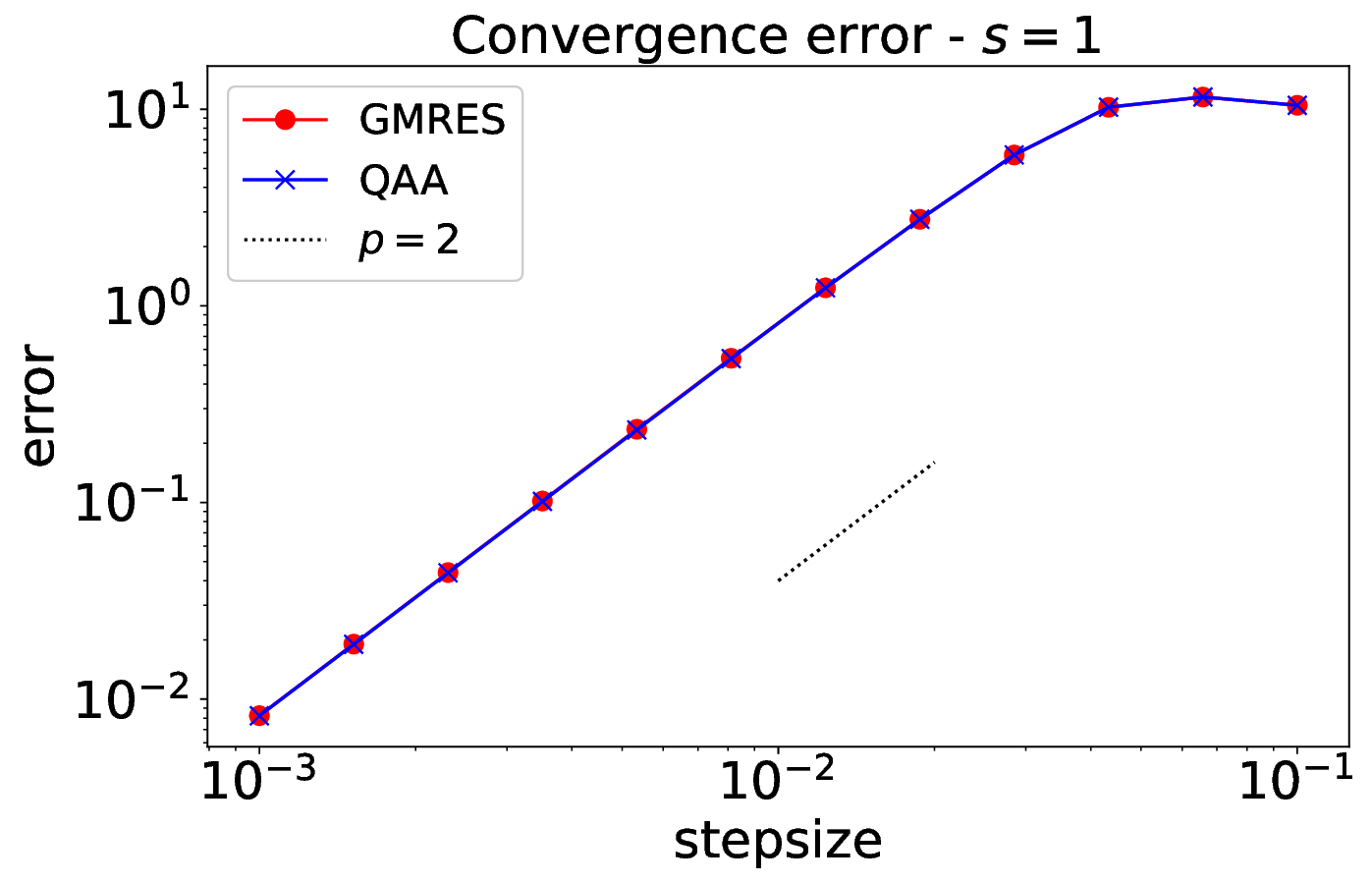}}%
  \end{subfigure}
 \begin{subfigure}
      {\includegraphics[width=0.335\linewidth]{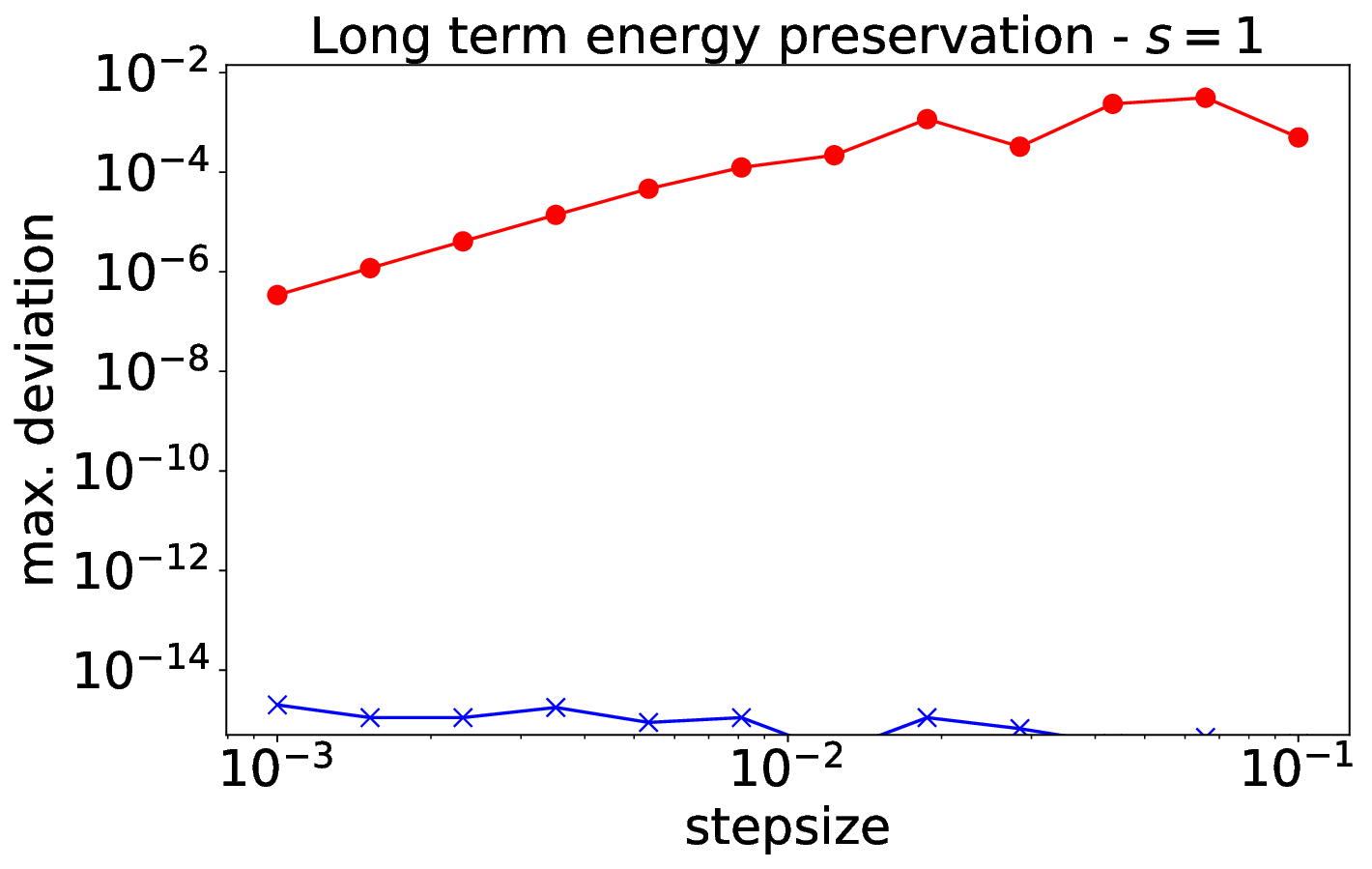}}%
  \end{subfigure}
  \begin{subfigure}
      {\includegraphics[width=0.32\linewidth]{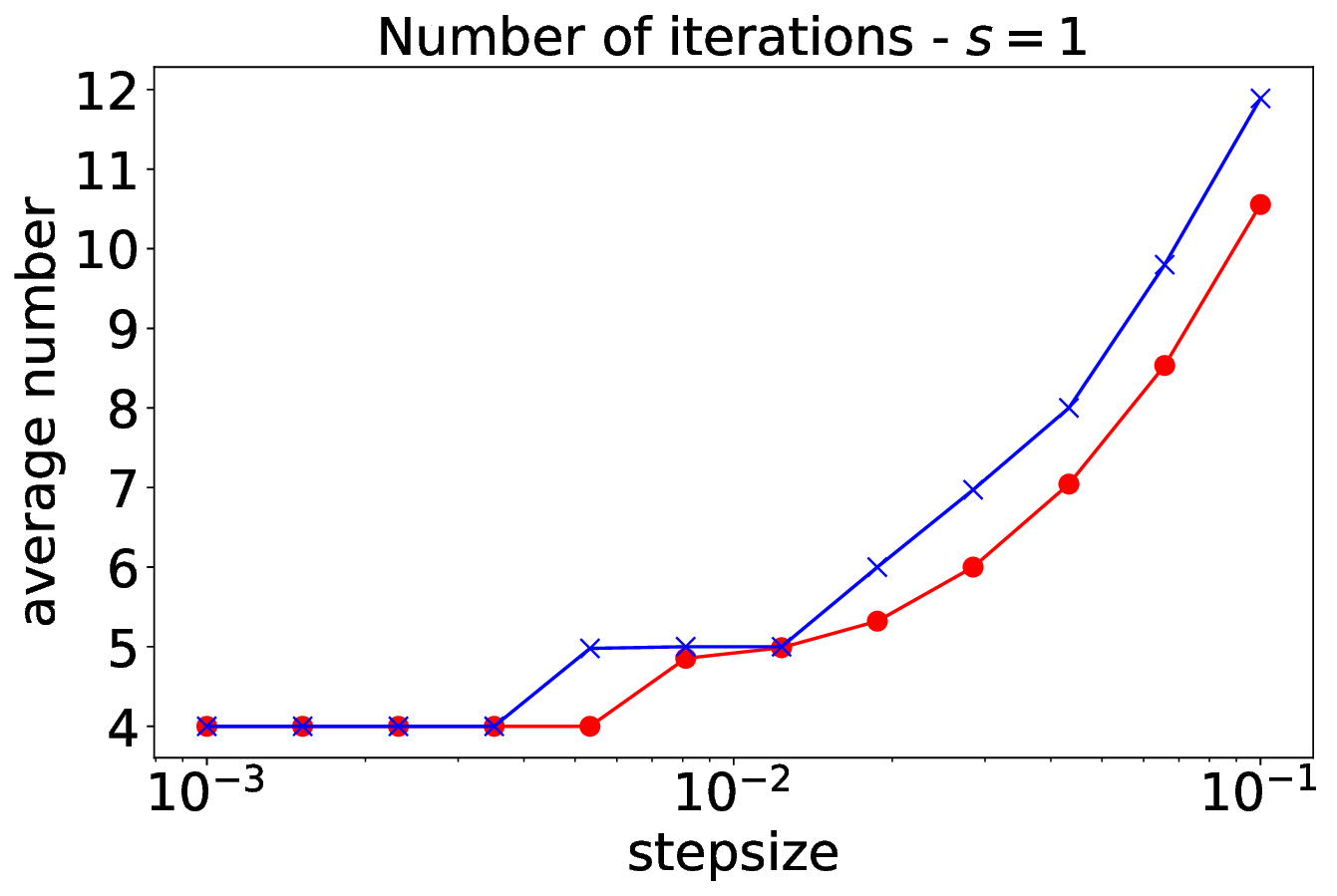}}%
  \end{subfigure}
\begin{subfigure}
      {\includegraphics[width=0.33\linewidth]{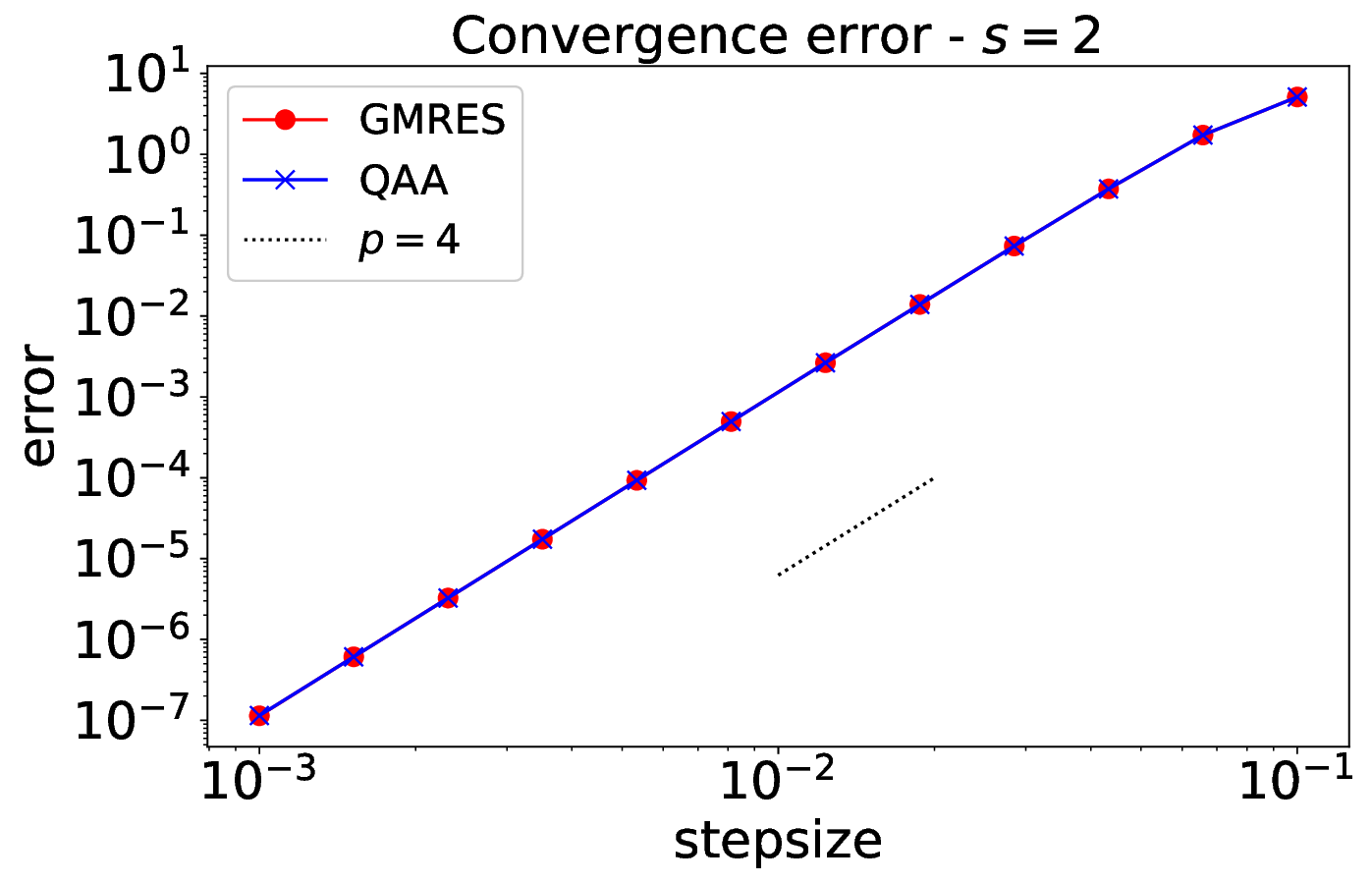}}%
  \end{subfigure}
 \begin{subfigure}
      {\includegraphics[width=0.335\linewidth]{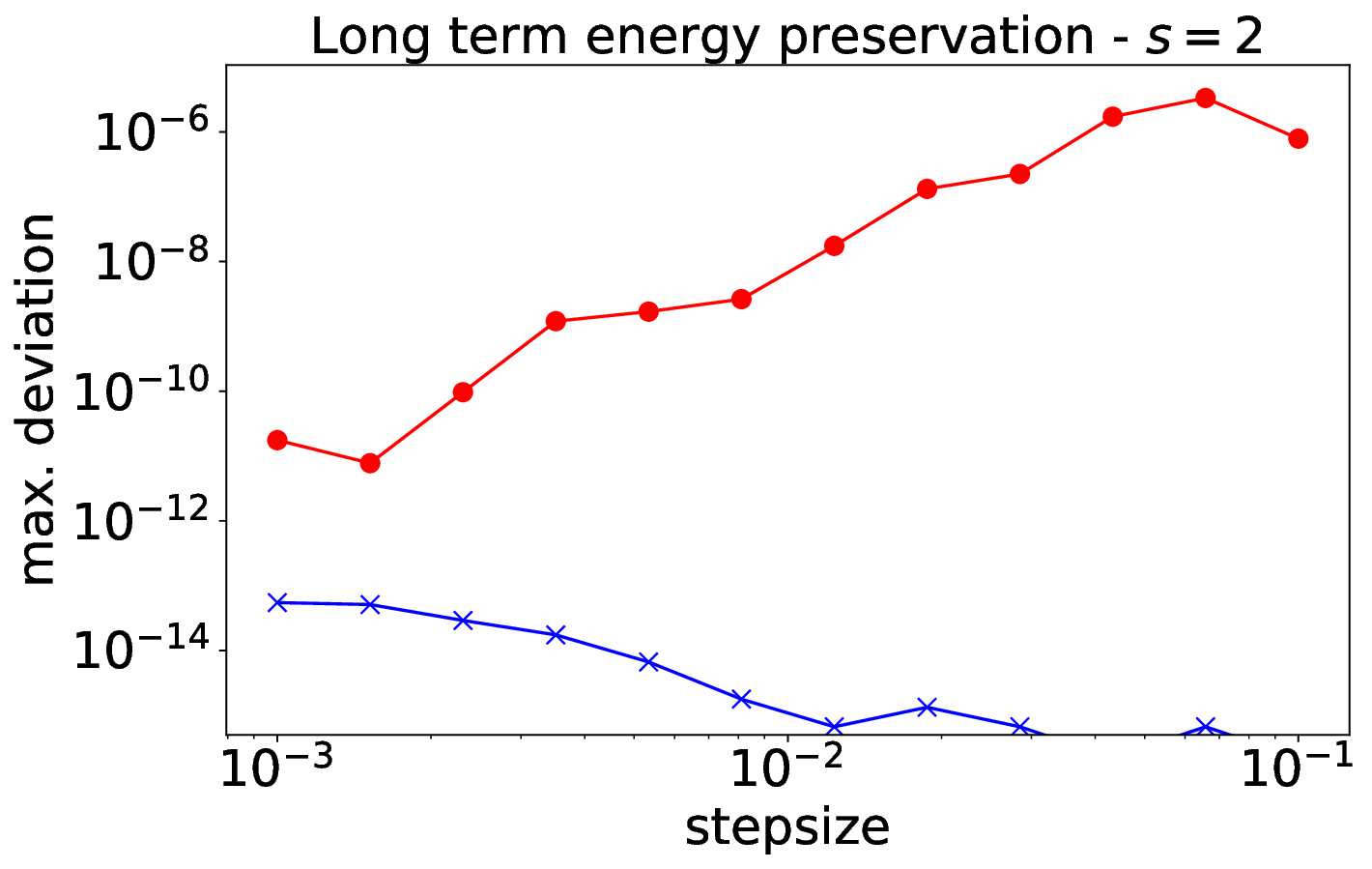}}%
  \end{subfigure}
  \begin{subfigure}
      {\includegraphics[width=0.32\linewidth]{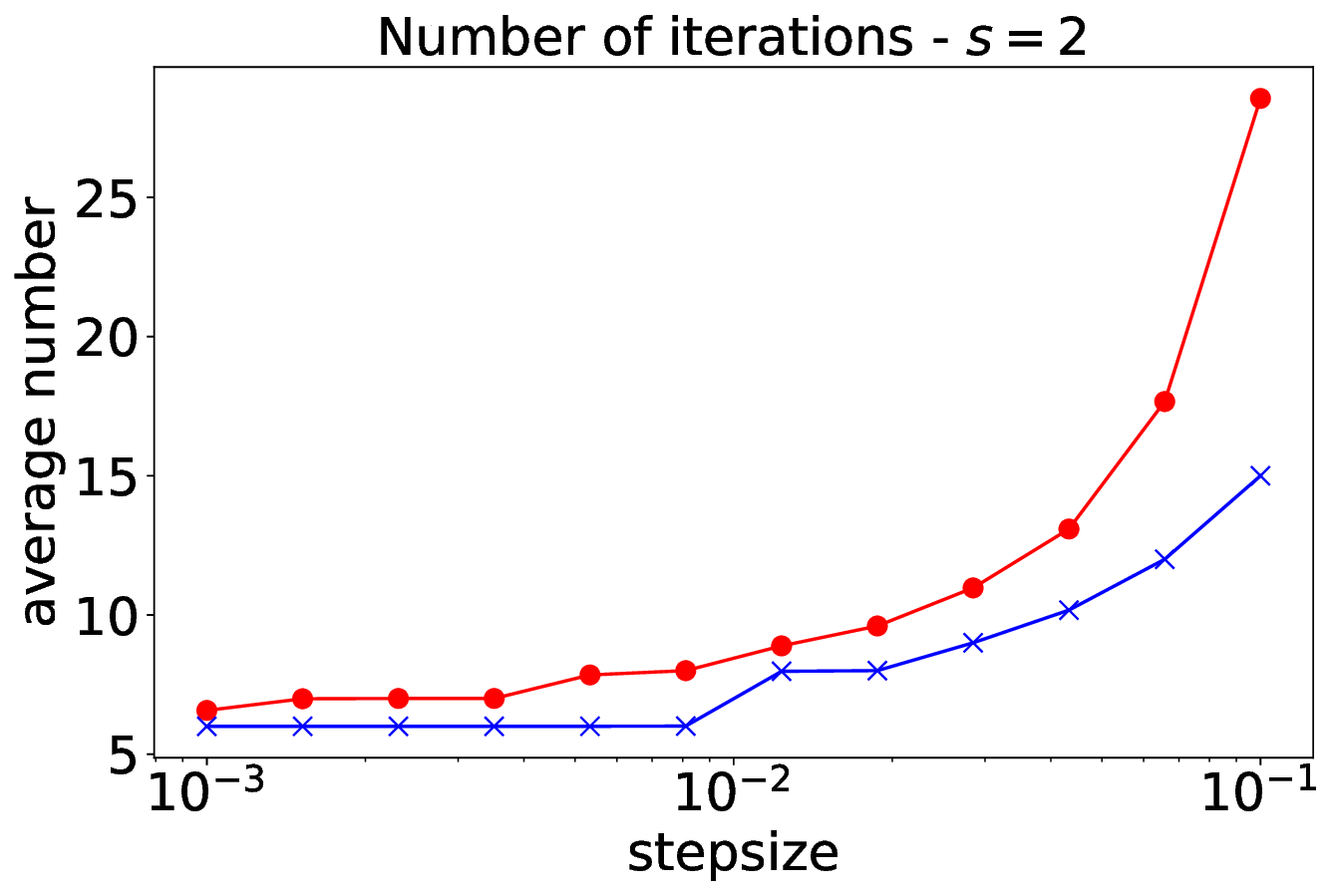}}%
  \end{subfigure} 
\begin{subfigure}
      {\includegraphics[width=0.33\linewidth]{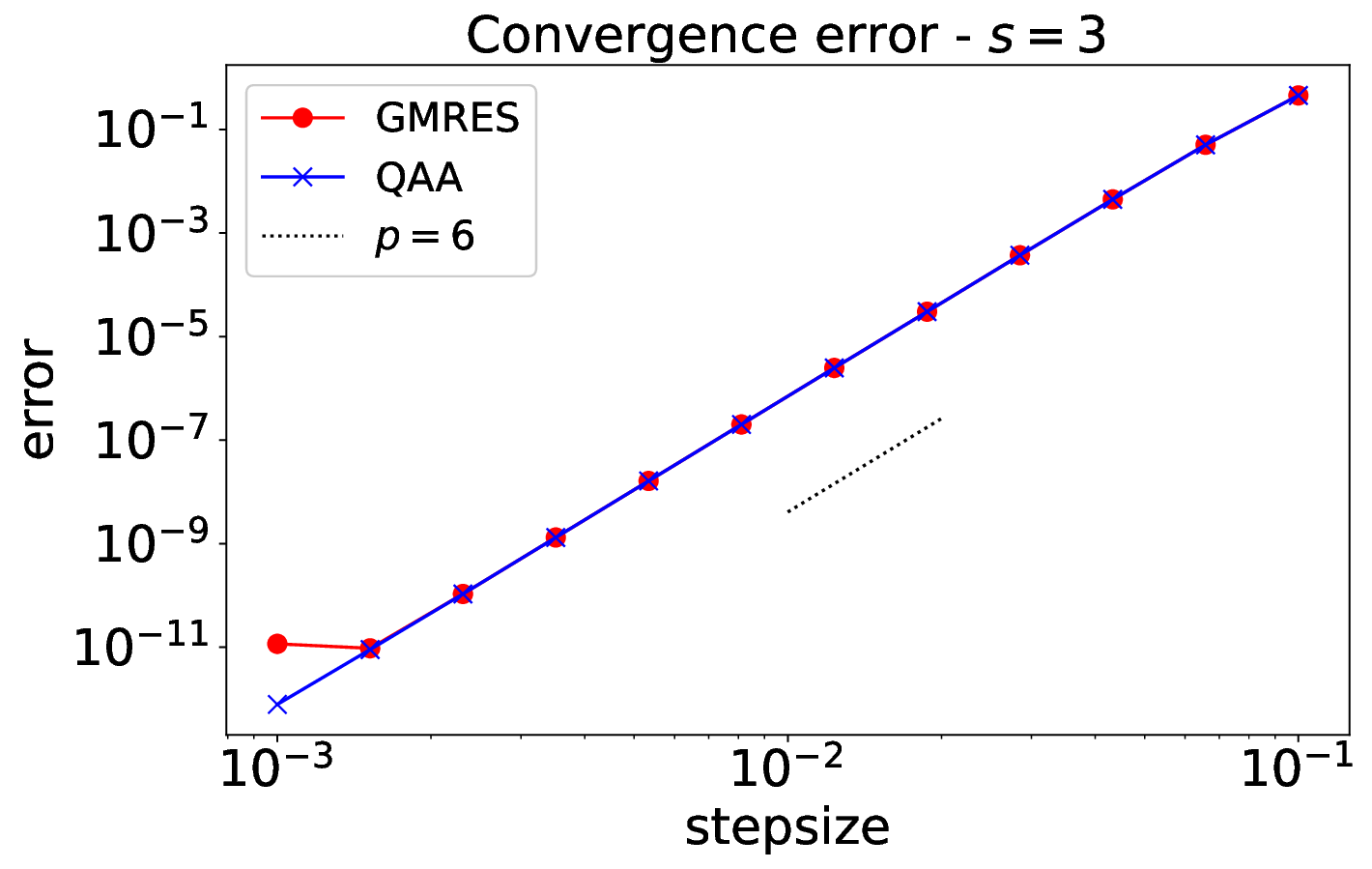}}%
  \end{subfigure}
 \begin{subfigure}
      {\includegraphics[width=0.335\linewidth]{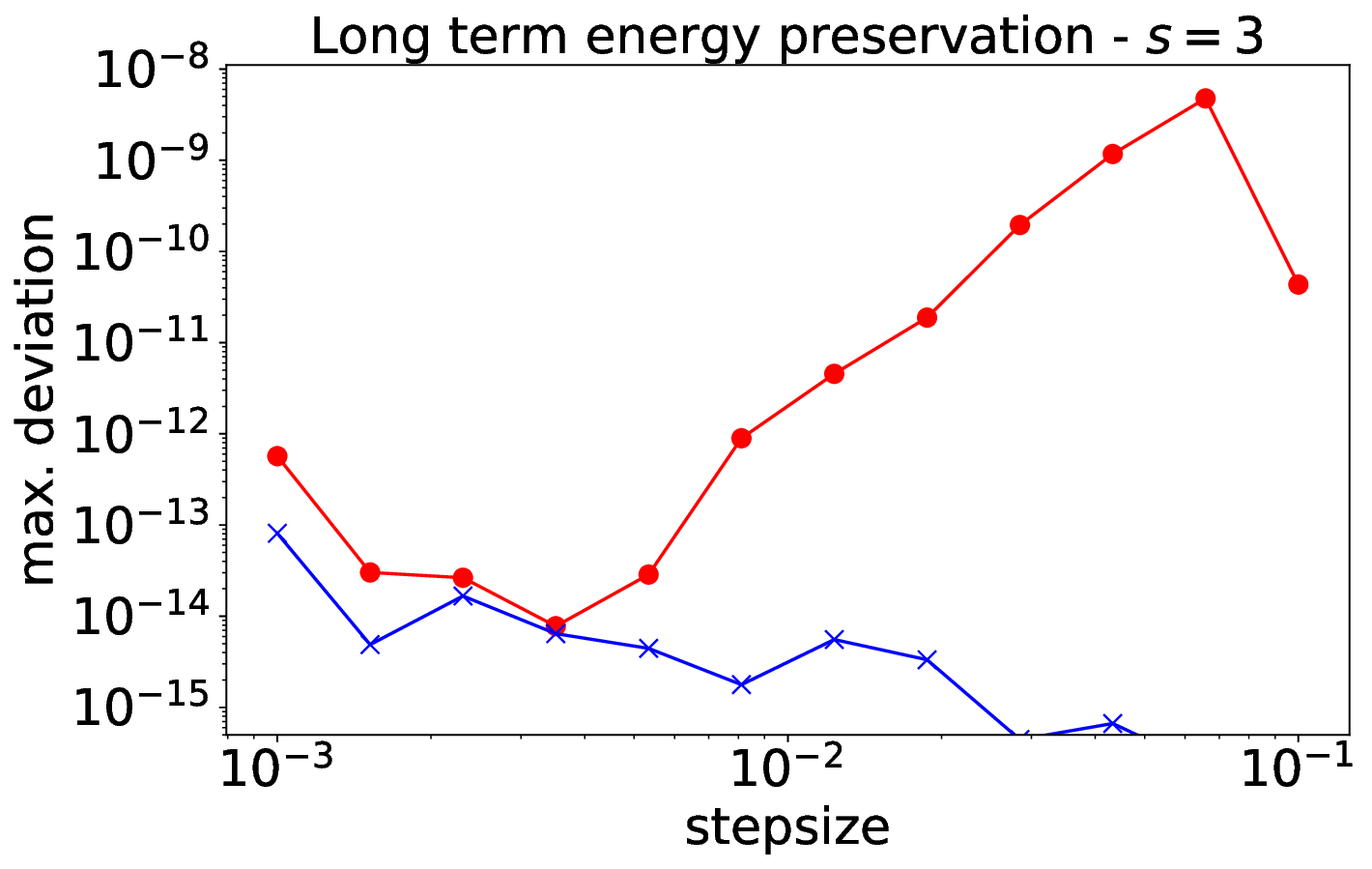}}%
  \end{subfigure}  
  \begin{subfigure}
   {\includegraphics[width=0.32\linewidth]{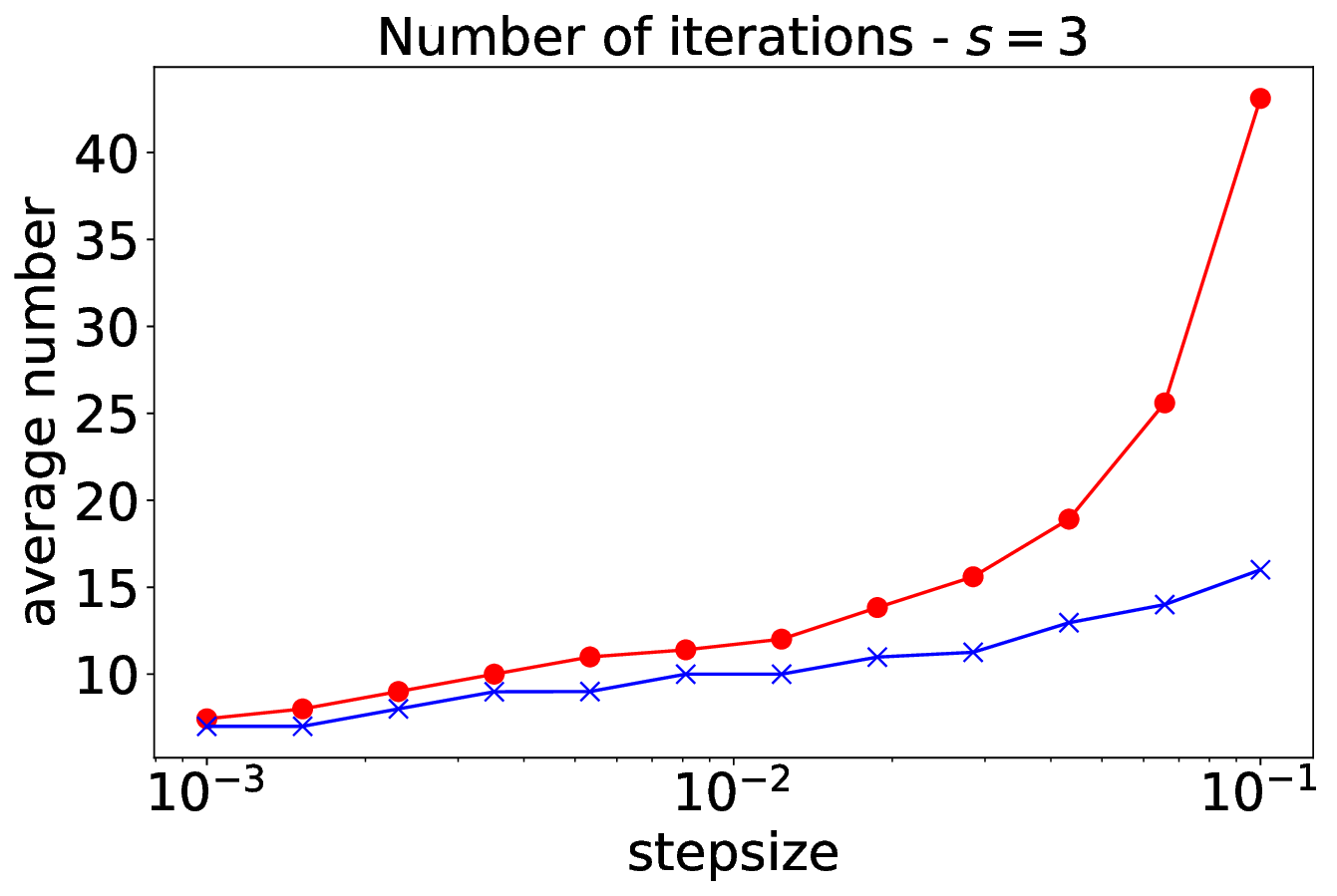}}%
  \end{subfigure}
  \caption{Performance of QAA and GMRES in Gauss integration of the mass-spring chain model over time interval $[0,1]$ for different step sizes $h$. Left: Convergence behavior of integrators w.r.t.\ $L^2([0,1])$-error. Middle: Absolute deviation from energy-conservation, i.e.\ $\max_i| 1 - \lVert y_i \rVert_Q/\lVert y_0 \rVert_Q|$. Right: Averaged number of Krylov subspace iterations per time step. Top to bottom: Gauss integrators with $s=1$, $s=2$ and $s=3$.}
\label{fig:orderMidpoint}
  \end{figure}

To explore the long-term performance, we solve the Poisson system on the time interval $[0,1]$ by help of the Gauss integrators using different step sizes $h$. In each time step $t_i=ih$, we iteratively compute the numerical solution $y_i\approx y(ih)$ with QAA and GMRES, respectively. We terminate the iteration once the Euclidian norm of the residual w.r.t.\ \eqref{eq:GMRES} is less than $h^{2s}$ as this is compatible with the order $p=2s$ of the respective integrator.\footnote{Due to the machine precision, the stopping criterion is set to $10^{-15}$ for $s=3$.} The reference solution is determined via the Python routine \texttt{scipy.sparse.linalg.expm\_multiply}, see \cite{numerikKonvergenzVgl} for details.

Figure~\ref{fig:orderMidpoint} illustrates the impact of the underlying iteration schemes on the numerical solution $y_i$ in terms of accuracy/convergence and energy preservation, and also highlights the respective costs. The overall convergence of the numerical solution to the reference as $h\rightarrow 0$ is achieved with both iteration schemes (QAA and GMRES); we numerically observe the expected convergence order $p=2s$ for all Gauss integrators (Fig.~\ref{fig:orderMidpoint}, left). With GMRES, energy preservation is not ensured in general. How well energy is preserved depends crucially on the step size $h$ and the order $p$ of the integrator: the smaller $h$ or the higher $p$, the smaller the deviation becomes (cf.\ Fig.~\ref{fig:orderMidpoint}, middle). This is to be expected, since the higher the accuracy demands, the closer the approximation $y_i$ gets to the reference $y(ih)$, which is energy-conserving. With QAA, in contrast, the numerical solution $y_i$ is energy-preserving over the entire time interval for all step sizes. Regardless of the Gauss integrator used we observe a maximum deviation from energy-conservation in the order of magnitude of the machine accuracy $\mathcal{O}(10^{-15})$ as in Section~\ref{sec:conv}. The marginal increase of the error to $\mathcal{O}(10^{-14})$ for smaller $h$ results from the accumulation of rounding errors which increase with increasing number of time steps. 

As for the cost of the computation, Fig.~\ref{fig:orderMidpoint} (right) shows the averaged number of iterations per time step for different step sizes. Both schemes (QAA and GMRES) show the tendency that the number of iterations that are required to meet the accuracy decreases as $h$ decreases. For $h=10^{-3}$, both schemes need 4-5 iterations independent of the Gauss integrator. For larger $h$ and $s>1$, QAA requires fewer iterations than GMRES. We note that the effort of an iteration step is comparable in both iteration schemes. Summing up, QAA is at least competitive with GMRES in terms of accuracy and cost, and superior in terms of energy conservation.

\section{Cayley Transformations in Non-linear Solvers} \label{sec:nonlinear}

If we have a state-dependent matrix $J=J(y)$, then the Poisson system is no longer linear. As a consequence, approaches for calculating a numerical solution that preserve the energy will naturally yield systems of non-linear equations. In this section, we discuss how we can construct non-linear iterative solvers for the systems arising from the midpoint rule such that they preserve energy in each iteration.

Applying the midpoint rule with step size $h$ to system \eqref{eq:energypresSys} leads to
\begin{align}
    y_1 &= y_0 + hJ(\tfrac{y_0 + y_1}{2})Q\tfrac{y_0 + y_1}{2} \nonumber \\
\Leftrightarrow    (I - \tfrac{h}{2}J(\tfrac{y_0 + y_1}{2})Q)y_1 &= (I + \tfrac{h}{2}J(\tfrac{y_0 + y_1}{2})Q)y_0 \nonumber \\
\Leftrightarrow   \hspace*{2.8cm}  y_1 &= (I - \tfrac{h}{2}J(\tfrac{y_0 + y_1}{2})Q)^{-1}(I + \tfrac{h}{2}J(\tfrac{y_0 + y_1}{2})Q)y_0.  \label{eq:IMP}
\end{align}
In the representation of the midpoint rule via the non-linear equation \eqref{eq:IMP} for $y_1$ we retrieve the first diagonal Padé approximation and the Cayley transform, since
\begin{align}
\label{eq:nonlinearCayley}
      y_1 = \mathcal{R}_1\big(hJ(\tfrac{y_0 + y_1}{2})Q\big)y_0 = \mathcal{C}\big(\tfrac{h}{2}J(\tfrac{y_0 + y_1}{2})Q\big)y_0=:\Phi_h(y_1).
\end{align}
Because of $\tfrac{h}{2}J(\tfrac{y_0 + y_1}{2})Q \in \mathfrak{g}_Q$, Theorem~\ref{th:LieGroupAlgebraPade} shows that the inverse of $I - \tfrac{h}{2}J(\tfrac{y_0 + y_1}{2})Q$ always exists and that, most importantly, energy is preserved just as it is along the solution $y(t)$ of \eqref{eq:energypresSys}, i.e.\ we have $\|y_1\|_Q = \|y_0\|_Q $. 
We now discuss two different approaches to iteratively solve the non-linear system \eqref{eq:IMP} for $y_1$.
 As in the linear case we are interested in iterative methods in which the energy is not only conserved upon convergence, but also at the level of each individual iterate. 
 
\subsection{Fixed-point iteration}
Equation \eqref{eq:nonlinearCayley} defines $y_1$ in fixed-point form as
\begin{equation*}
y_1 = \Phi_h(y_1) \enspace \text{ where }  
      \Phi_h(x) = \mathcal{C}\big(\tfrac{h}{2}J(\tfrac{y_0 + x}{2})Q\big)y_0.
\end {equation*}
We now investigate the convergence of the corresponding fixed-point iteration 
\begin{equation} \label{eq:fixed_point_iteration}
x_{k+1} = \Phi_h(x_k), \; k=0,1,\ldots \text{ with starting vector } x_0.
\end{equation}

\begin{theorem} \label{the:contraction}
Let $\mathcal{B}(y; r) = \{ x \in \mathbb{R}^n: \|x-y\|_Q \leq r\}$ be the ball centered at $y$ with radius $r \geq 0$ in the $Q$-norm. Then
\begin{itemize}
    \item[(i)] For any $x \in \mathbb{R}^n$ and any $h > 0$ we have $\Phi_h(x) \in \mathcal{B}(y_0; r_0)$ with $r_0 = 2 \|y_0\|_Q$.
    \item[(ii)] For any $x \in \mathbb{R}^{n}$ and $h > 0$ we have
    \[
    \|\Phi_h(x)\|_Q = \|y_0\|_Q.
    \]
    \item[(iii)] If $J(\cdot)Q$ is Lipschitz-continuous in the ball $\mathcal{B}(y_0; r_0)$ with Lipschitz constant $L$ w.r.t.\ the $Q$-norm, then 
    $\Phi_h$ satisfies
    \[
    \| \Phi_h(x) - \Phi_h(x') \|_Q \leq h\tfrac{Lr_0}{4}\, \| x - x'\|_Q \enspace \text{ for all } x, x' \in \mathcal{B}(y_0; r_0).
    \]
    \item[(iv)] Under the assumptions of (iii), if $h < \frac{4}{Lr_0}$ then $\Phi_h$ is a contraction on  $\mathcal{B}(y_0; r_0)$, and the iterates $x_k$ of $\eqref{eq:fixed_point_iteration}$ converge to the fixed point $y_1$ of $\Phi_h(x)$ for any starting vector $x_0 \in \mathbb{R}^n$. 
\end{itemize}
\end{theorem}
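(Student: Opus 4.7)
The plan is to prove (ii) first, which then gives (i) immediately by the triangle inequality, and to use Corollary~\ref{cor:Lipschitz} together with the Lipschitz hypothesis on $J(\cdot)Q$ for (iii). Finally, (iv) is a Banach contraction argument made possible by the preceding three items. The key observation throughout is to rewrite the Cayley transform in terms of the first diagonal Pad\'e approximation via $\mathcal{C}(\tfrac{1}{2}A) = \mathcal{R}_1(A)$, so that all the invariance and Lipschitz machinery of Section~\ref{sec:pade} applies directly.

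For (ii), I would set $A_x = hJ(\tfrac{y_0+x}{2})Q$, so that $\Phi_h(x) = \mathcal{R}_1(A_x)y_0$. Skew-symmetry of $J$ combined with $Q^T=Q$ yields $QA_x + A_x^TQ = 0$, i.e.\ $A_x \in \mathfrak{g}_Q$. Since $Q > 0$, Theorem~\ref{th:LieGroupAlgebraPade} guarantees that $\mathcal{R}_1(A_x)$ is defined and belongs to $\mathcal{G}_Q$, so $\|\Phi_h(x)\|_Q = \|y_0\|_Q$. Part (i) is then immediate: $\|\Phi_h(x) - y_0\|_Q \leq \|\Phi_h(x)\|_Q + \|y_0\|_Q = 2\|y_0\|_Q = r_0$.

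For (iii), I would write $\Phi_h(x) - \Phi_h(x') = (\mathcal{R}_1(A_x) - \mathcal{R}_1(A_{x'}))y_0$ and chain three estimates: extracting the factor $y_0$ picks up $\|y_0\|_Q = r_0/2$; Corollary~\ref{cor:Lipschitz} gives $\|\mathcal{R}_1(A_x) - \mathcal{R}_1(A_{x'})\|_Q \leq \|A_x - A_{x'}\|_Q$ (Lipschitz constant 1); and the assumed Lipschitz continuity of $J(\cdot)Q$ on $\mathcal{B}(y_0;r_0)$, combined with the factor $\tfrac{1}{2}$ inherited from the averaging argument $\tfrac{y_0+x}{2}$, yields $\|A_x - A_{x'}\|_Q \leq \tfrac{hL}{2}\|x-x'\|_Q$. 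The product of these factors is exactly $\tfrac{hLr_0}{4}$. The one place to be careful is that although $\mathcal{C}$ has Lipschitz constant $2$, rewriting the problem in terms of $\mathcal{R}_1$ replaces the argument $\tfrac{h}{2}J(\cdots)Q$ by $hJ(\cdots)Q$ and lets us use the sharper constant $1$ from Corollary~\ref{cor:Lipschitz}.

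For (iv), the condition $h<\tfrac{4}{Lr_0}$ makes the Lipschitz constant from (iii) strictly less than one. By (i), $\Phi_h$ maps all of $\mathbb{R}^n$ — and in particular the closed ball $\mathcal{B}(y_0; r_0)$ — into $\mathcal{B}(y_0; r_0)$, so Banach's fixed-point theorem on the complete metric space $\mathcal{B}(y_0; r_0)$ yields a unique fixed point therein; since $y_1 \in \mathcal{B}(y_0;r_0)$ is itself such a fixed point, the iterates starting from any $x_0 \in \mathbb{R}^n$ enter the ball after a single step and converge to $y_1$ thereafter. No step is genuinely difficult: the hard work was done in Section~\ref{sec:pade}, and the only delicate bookkeeping is tracking the constants through the composition $x \mapsto \tfrac{y_0+x}{2} \mapsto J(\cdot)Q \mapsto \mathcal{R}_1(\cdot)y_0$ in (iii).
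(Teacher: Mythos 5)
Your proposal is correct and follows essentially the same route as the paper: membership of $hJ(\tfrac{y_0+x}{2})Q$ in $\mathfrak{g}_Q$ plus Theorem~\ref{th:LieGroupAlgebraPade} for (i)--(ii), the Lipschitz bound of Corollary~\ref{cor:Lipschitz} combined with that of $J(\cdot)Q$ for (iii), and Banach's fixed-point theorem on $\mathcal{B}(y_0;r_0)$ after one initial step for (iv). Working with $\mathcal{R}_1$ (constant $1$, argument $hJ(\cdot)Q$) rather than $\mathcal{C}$ (constant $2$, argument $\tfrac{h}{2}J(\cdot)Q$) is only a cosmetic difference, yielding the identical constant $h\tfrac{Lr_0}{4}$.
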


\begin{proof}
By Theorem~\ref{th:LieGroupAlgebraPade} we have $\mathcal{C}\big(\tfrac{h}{2}J(z)Q\big) = \mathcal{R}_1(hJ(z)Q) \in \mathcal{G}_Q$ for all $z\in \mathbb{R}^n$ which, by Lemma~\ref{lem:lie_algebra_spec}, implies $\|\mathcal{C}\big(\tfrac{h}{2}J(z)Q\big)\|_Q = 1$. Thus, for all $x \in \mathbb{R}^n$ and all $h \geq 0$  
    \begin{align*}
    \lVert \Phi_h(x) - y_0 \rVert_Q &= \lVert (\mathcal{C}\big(\tfrac{h}{2}J(\tfrac{y_0+x}{2})Q\big) - I)\,y_0 \rVert_Q
    \leq 
    \lVert \mathcal{C}\big( \tfrac{h}{2}J(\tfrac{y_0+x}{2})Q\big) - I \rVert_Q \lVert y_0 \rVert_Q \\
    &\leq (\lVert \mathcal{C}\big( \tfrac{h}{2}J(\tfrac{y_0+x}{2})Q \big) \rVert _Q + \lVert I \rVert_Q) \lVert y_0 \rVert_Q = 2 \lVert y_0 \rVert_Q = r_0.
    \end{align*}
    This proves (i). 
    
    To obtain (ii), we first note that for any $z \in \mathbb{R}^n$ and any $B$ in the Lie group $\mathcal{G}_Q$ we have
    \[
    \|Bz\|^2_Q = (Bz)^TQ(Bz) = z^T(B^TQB)z = z^TQz = \|z\|^2_Q. 
    \]
    Since $\tfrac{h}{2}J(\tfrac{y_0+x}{2})Q \in \mathfrak{g}_Q$, and since the Cayley transform $\mathcal{C}\big(\tfrac{h}{2}J(\tfrac{y_0+x}{2})Q\big)$ maps $\mathfrak{g}_Q$ on $\mathcal{G}_Q$, this shows $\|\Phi_h(x)\|_Q = \|y_0\|_Q$ for any $x$ and $h$. 
    
    To see (iii), we recall that the Cayley transform is Lipschitz continuous with Lipschitz constant $2$ on $\mathfrak{g}_Q$; cf.\ Corollary~\ref{cor:Lipschitz}. Hence, the Lipschitz continuity of $\Phi_h$ on $\mathcal{B}(y_0;r_0)$, $r_0=2\|y_0\|_Q$, follows from   
   \begin{align*}
    \| \Phi_h(x)-\Phi_h(x')\|_Q &\leq \| \mathcal{C}\big(\tfrac{h}{2}J(\tfrac{y_0+x}{2})Q\big) - \mathcal{C}\big(\tfrac{h}{2}J(\tfrac{y_0+x'}{2})Q\big)\|_Q \,\|y_0\|_Q \\
    &\leq 2 \| \tfrac{h}{2}J(\tfrac{y_0+x}{2})Q - \tfrac{h}{2}J(\tfrac{y_0+x'}{2})Q \|_Q \, \|y_0\|_Q \, \leq \,  \tfrac{h}{2}L \|y_0\|_Q \, \|x-x'\|_Q.    
    \end{align*}
    
    As for (iv), if $h < \tfrac{4}{Lr_0}$ we have $h\tfrac{Lr_0}{4} <1$, so that by (iii) the function $\Phi_h$ is indeed a contraction on $\mathcal{B}(y_0;r_0)$. Also note that by (i) we have $x_1 \in \mathcal{B}(y_0; r_0) $. With respect to the sequence of iterates $x_1,x_2,\ldots$ (in which we eliminated $x_0$), Banach's fixed-point theorem (see \cite{banach}, e.g.) ensures the convergence of the $x_k$ to the unique fixed point in $\mathcal{B}(y_0,r_0)$.
\end{proof}

A remarkable point in Theorem~\ref{the:contraction} is that we obtain {\em global} convergence of the iteration once $h\tfrac{Lr_0}{4} < 1$: we obtain convergence for just any starting vector $x_0$, irrespectively of its distance to the fixed point, while the Lipschitz-continuity needs to hold only locally (on $\mathcal{B}(y_0; r_0)$).  

For the fixed-point iteration \eqref{eq:fixed_point_iteration}, we have to compute a Cayley transform in each iteration. Thus, a further interesting fact is that if we do this iteratively using the $Q$-Arnoldi approximation approach from Section~\ref{subsec:KrylovArnoldi}, all iterates of this ``inner'' iteration preserve the $Q$-norm, too. As a consequence, whatever stopping strategy we take for the fixed-point iteration and for the inner iteration, we will always obtain energy-conserving approximations to $y_1$.

A practically feasible way to measure the quality of the iterates $x_k$ is to use the residual quantity given by 
\begin{equation} \label{eq:nonlinear_residual}
\widetilde{r}(x) = (I - \tfrac{h}{2}J(\tfrac{y_0 + x}{2})Q)(x-\Phi_h(x)) = (I - \tfrac{h}{2}J(\tfrac{y_0 + x}{2})Q)x - (I + \tfrac{h}{2}J(\tfrac{y_0 + x}{2})Q)y_0 
\end{equation}
and to stop the iteration when  $\widetilde{r}(x_k)$ is sufficiently small. This avoids the computation of inverse matrices or the solving of linear systems in the evaluation of $\Phi_h(x)$.

\subsection{Newton-like methods}
The fixed-point iteration \eqref{eq:fixed_point_iteration} converges at a linear rate. A general framework for obtaining faster converging methods is given by the Newton-like iterations.  In our setting \eqref{eq:nonlinearCayley}, we can use a  Newton-like scheme
to compute $y_1$ as a zero of the function
\[
F_h: \mathbb{R}^n \to \mathbb{R}^n, \enspace   x \mapsto x - \Phi_h(x).
\]

In a Newton-like method to obtain a root of a generic function $F:\mathbb{R}^n\rightarrow \mathbb{R}^n$, starting from an initial value $x_0 \in \mathbb{R}^n$, iterations are calculated as 
\begin{align}
\label{alg:NewLike}
x_{k+1} = x_k - B_k^{-1}F(x_k),
\end{align}
where $B_k$ is an approximation of the Jacobian $DF(x)$ of $F$ in $x = x_k$. If we choose $B_k = DF(x_k)$, we get the classical Newton method which, under canonical assumptions, converges locally and quadratically. In our situation, however, linear systems with $DF(x_k)$ are somewhat cumbersome to deal with practically, since $DF(x_k)$ involves a third order tensor coming from the derivative of $J(y)$ and two matrix inverses, see e.g.\ \cite{Hairer2004GeometricNI}. This is why we turn to the important class of Newton-like methods given by the quasi-Newton methods. There, $B_k$ is determined in an updating procedure exploiting the secant condition $B_{k+1} (x_{k+1} - x_k) = F(x_{k+1}) - F(x_k)$. One of the most well-known quasi-Newton methods is the BFGS method  \cite{dennis1977quasi} which we now consider in more detail as a representative of many other quasi-Newton methods. BFGS uses the update formula 
\begin{align*}
    B_{k+1}^{-1} = \left(I - \tfrac{s_k z_k^T}{z_k^T s_k} \right) B_{k}^{-1} \left(I - \tfrac{z_k s_k^T}{z_k^T s_k} \right) + \tfrac{s_k s_k^T}{z_k^T s_k}, \qquad B_0^{-1}=I,
\end{align*} 
where $z_k = F(x_{k+1}) - F(x_k)$ and $s_k = x_{k+1} - x_k$. 

For $F_h(x) = x -\Phi_h(x)$, 
the BFGS scheme \eqref{alg:NewLike} results in iterations of the form
\begin{equation}
\label{alg:NewLikeSpec}
\left.
\begin {array}{rcl}
w_k &= & \mathcal{C}\big(\tfrac{h}{2}J(\tfrac{y_0 + x_{k} }{2})Q\big)y_0 \\
x_{k+1} &=& x_k -  B_k^{-1}(x_k - w_k)
\end{array}
\right\} \enspace k=0,1,\ldots .
\end{equation}
Under canonical assumptions \cite{dennis1977quasi}, the BFGS iterates $x_k$ converges locally Q-superlinearly to the root $y_1$ of $F_h$, i.e.\ $\lim_{k\rightarrow \infty} \|x_{k+1}-y_1\|/\|x_k-y_1\|=0$. 
This trivially implies the R-superlinear convergence to $y_1$, i.e.\ there exists an error bounding sequence $\varepsilon_k$ such that $ \|x_k-y_1\|\leq \varepsilon_k$ for all $k$ and $\varepsilon_k$ converge Q-superlinearly to $0$. We refer to \cite{nocedal} for further details on R- and Q-convergence.

We on purpose expose $w_k$ explicitly in \eqref{alg:NewLikeSpec}, because this intermediate quantity plays a crucial role for energy preservation.  We call  $w_k$ the \emph{Cayley iterates} from now on. 
With respect to our main concern ---energy conservation---, we can now formulate the following result. 

\begin{theorem} \label{thm:cayley_newton_method}
Assume that $J$ is continuous on $\mathbb{R}^n$ and that the BFGS iterates $x_k$ from \eqref{alg:NewLikeSpec} converge to $y_1$ with $F_h(y_1) = 0$. 
Then 
\begin{itemize}
    \item[(i)] The Cayley iterates $w_k$ converge to $y_1$, too.
    \item[(ii)] For all $k$ we have $\|w_k\|_Q = \|y_0\|_Q$. 
    \item[(iii)] If $J$ is Lipschitz-continuous, then there exists a constant $C\geq0$ such that
    \[
    \|w_k-y_1\|_Q \leq C \|x_k-y_1\|_Q \,\, \text{ for all } k.
    \]
    In particular, if the iterates $x_k$ converge Q-superlinearly, do the Cayley iterates $w_k$.
\end{itemize}
\end{theorem}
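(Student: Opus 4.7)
The plan is to address parts (ii), (i), and (iii) of Theorem~\ref{thm:cayley_newton_method} in that order. The central observation driving all three arguments is that the Cayley iterate in \eqref{alg:NewLikeSpec} can be written as $w_k = \Phi_h(x_k)$, where $\Phi_h$ is the map from \eqref{eq:nonlinearCayley}, and that the assumption $F_h(y_1) = 0$ is precisely the fixed-point identity $y_1 = \Phi_h(y_1)$.

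Part (ii) is immediate from Theorem~\ref{the:contraction}(ii), which states that $\|\Phi_h(x)\|_Q = \|y_0\|_Q$ for every $x \in \mathbb{R}^n$ and every $h>0$. Applied with $x = x_k$, this yields the norm identity for each $k$ without using convergence of the $x_k$ or any additional regularity of $J$.

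For part (i), the plan is to first show that $\Phi_h$ is continuous on all of $\mathbb{R}^n$ and then pass to the limit. Continuity of $J$ makes $z \mapsto \tfrac{h}{2}J(z)Q$ a continuous map into $\mathfrak{g}_Q$; by Lemma~\ref{lem:lie_algebra_spec} combined with Theorem~\ref{th:LieGroupAlgebraPade}(ii), the matrix $I - \tfrac{h}{2}J(z)Q$ is invertible for every $z$, so the Cayley transform is well defined and varies continuously with $z$. Hence $\Phi_h$ is continuous, and $x_k \to y_1$ gives $w_k = \Phi_h(x_k) \to \Phi_h(y_1) = y_1$.

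For part (iii), I would reuse the chain of inequalities at the end of the proof of Theorem~\ref{the:contraction}(iii), which combines Corollary~\ref{cor:Lipschitz} (the global Lipschitz constant $2$ of the Cayley transform on $\mathfrak{g}_Q$) with the Lipschitz bound on $J(\cdot)Q$. That computation nowhere uses $x, x' \in \mathcal{B}(y_0; r_0)$, so under the present global Lipschitz hypothesis on $J$ it gives
\[
\|\Phi_h(x)-\Phi_h(x')\|_Q \,\leq\, \tfrac{h L \|y_0\|_Q}{2}\,\|x-x'\|_Q \quad \text{for all } x,x' \in \mathbb{R}^n,
\]
where $L$ is a Lipschitz constant of $J(\cdot)Q$ in the $Q$-norm. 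Taking $x = x_k$, $x' = y_1$ and using $y_1 = \Phi_h(y_1)$ yields the stated estimate with $C = \tfrac{h L \|y_0\|_Q}{2}$. The most delicate point I anticipate is the superlinear conclusion: the one-sided bound $\|w_k - y_1\|_Q \leq C\|x_k - y_1\|_Q$ immediately produces an R-superlinearly convergent upper bound $\varepsilon_k := C\|x_k - y_1\|_Q$ on $\|w_k - y_1\|_Q$, but promoting this to genuine Q-superlinear convergence of the $w_k$ themselves requires either a matching lower bound of the form $\|w_k - y_1\|_Q \gtrsim \|x_k - y_1\|_Q$ or an argument that couples $x_{k+1}$ and $w_k$ through the BFGS update $x_{k+1} - x_k = -B_k^{-1}(x_k - w_k)$. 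This is where I would focus most of the remaining effort.
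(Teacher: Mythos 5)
Your proposal is correct and follows essentially the same route as the paper: part (ii) via Theorem~\ref{the:contraction}(ii), part (i) by continuity of $x\mapsto \mathcal{C}\big(\tfrac{h}{2}J(\tfrac{y_0+x}{2})Q\big)y_0$, and part (iii) by the Lipschitz estimate for the Cayley transform combined with that for $J(\cdot)Q$, yielding $C=\tfrac{hL\|y_0\|_Q}{2}$. Regarding the point you flag at the end: your concern is mathematically well taken, but you need not invest further effort there --- the paper's own proof also stops at the one-sided bound, and in light of its preceding discussion distinguishing R- from Q-convergence, the intended (and proved) conclusion is only that $\|w_k-y_1\|_Q$ is dominated by the Q-superlinearly vanishing sequence $C\|x_k-y_1\|_Q$, i.e.\ R-superlinear convergence of the $w_k$; a genuine Q-superlinear statement would indeed require a lower bound that neither you nor the paper establishes.
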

\begin{proof}
Since $J$ and $\mathcal{C}$ are continuous, we have
\begin{align*}
    \lim_{k \to \infty} w_k = \lim_{k \to \infty} \mathcal{C}(\tfrac{h}{2}J(\tfrac{y_0 + x_{k} }{2})Q)y_0 = \mathcal{C}(\tfrac{h}{2}J(\tfrac{y_0+y_1}{2})Q)y_0 = y_1.
\end{align*}
This gives (i). 
And since $w_k = \Phi_h(x_k)$, we have  
$\|w_k\|_Q =  \|y_0\|_Q $ by  Theorem~\ref{the:contraction}(ii) which gives (ii). For (iii), let $L$ be a Lipschitz constant for  $J(\cdot)Q$. Since $y_1 = \mathcal{C}\big(\frac{h}{2}J(\frac{y_0+y_1}{2})Q\big)y_0$, we obtain, in a similar manner as in the proof of Theorem~\ref{the:contraction},
\begin{align*}
    \| w_k- y_1\|_Q &= \| \mathcal{C}\big(\tfrac{h}{2}J(\tfrac{y_0+x_k}{2})Q\big)y_0 - \mathcal{C}\big(\tfrac{h}{2}J(\tfrac{y_0+y_1}{2})Q\big)y_0\|_Q \\
    &\leq 2 \| \tfrac{h}{2}J(\tfrac{y_0+x_k}{2})Q - \tfrac{h}{2}J(\tfrac{y_0+y_1}{2})Q \|_Q \|y_0\|_Q\; \leq  \tfrac{h}{2}L \|y_0\|_Q\,  \|x_k-y_1\|_Q.    
    \end{align*}
\end{proof} 

In summary, Theorem~\ref{thm:cayley_newton_method} shows us that if the BFGS method converges for
$F_h$, then its Cayley iterates converge to the same solution and preserve the energy of the underlying Poisson system \eqref{eq:energypresSys}. And, as with the fixed-point iteration, if we use the $Q$-Arnoldi method as an inner iteration for approximating the Cayley transform, all these inner iterates also conserve energy. 
Thus, irrespectively of when we stop the inner or the BFGS iteration, our iterates $w_k$ preserve energy. As a stopping criterion for the BFGS iteration we can now use the residual quantity $\tilde{r}(w_k)$ with $\widetilde{r}$ from \eqref{eq:nonlinear_residual}. We refer to this approach as  the \emph{Cayley-BFGS method}.

\section{Numerical results for a non-linear system}
\label{sec:num_nl}

As a benchmark example for the non-linear case we consider the motion of a free rigid body (taken from \cite{gedrotierenderK}) which implies the non-linear Poisson system $\dot y=J(y)Qy$ with 
\begin{align*}
    J(y) = \left[\begin{array}{ ccc }
0 & -y_3 & y_2 \\
y_3 & 0 & -y_1 \\
-y_2 & y_1 & 0 \\
\end{array}\right], \qquad Q = \left[\begin{array}{ ccc }
\frac{1}{I_1} & 0 & 0 \\
0 & \frac{1}{I_2}  & 0 \\
0 & 0 & \frac{1}{I_3} 
\end{array}\right].
\end{align*}
The principal moments of inertia are set here to $I_1 = 2$, $I_2 = 1$ and $I_3 = \tfrac{2}{3}$, moreover we take $y_0 = [3,3,2]^T$ as initial value and $t\in [0,1]$. The discretization via the midpoint rule with step size $h$ leads to a non-linear system of equations (of the form \eqref{eq:nonlinearCayley}) in each time step.  In the following we numerically explore the performance of the fixed-point iteration (FP) and the Cayley-BFGS method in terms of accuracy, energy conservation and cost. As starting vector $x_0$ for the iterations at time point $t_{i+1}=(i+1)h$, we use the numerical solution $y_i$ of the former time step, i.e.\ $y_i\approx y(ih)$. The underlying computation of the Caley transform is realized with QAA variant~V.1 (as in Section~\ref{sec:num_lin}). Note, however, that for $n=3$ the use of a Krylov-subspace scheme is actually not necessary.

\subsection{Convergence and energy preservation} 
\label{sec:numNL_conv}
Considering the midpoint approximation $y_1=\mathcal{R}_1(hJ(\tfrac{y_0+y_1}{2})Q)y_0=\mathcal{C}(\tfrac{h}{2}J(\tfrac{y_0+y_1}{2})Q)y_0$ of the exact solution $y(h)$, we compare the fixed-point iterates and the Cayley iterates. Figure~\ref{fig:NewtonConvergence} illustrates the convergence behavior and the energy-preservation property for $h=0.1$. The accuracy of the iterates is visualized in terms of the Euclidian norm of the residual $\|\widetilde{r}(z_k)\|_2$ with $z_k=x_k$ in FP and $z_k=w_k$ in Cayley-BFGS. For the energy preservation the relative deviation w.r.t.\ the initial value, i.e.\ $| 1 - \lVert z_k \rVert_Q/\lVert y_0 \rVert_Q|$, is shown. In accordance with theory we numerically observe linear convergence of the fixed-point iteration. The Cayley iterates show a similar behavior in the first iterates, but then turn to superlinear convergence. While the quality of $x_8$ and $w_8$ is still comparable with a magnitude of $\mathcal{O}(10^{-5})$, the Cayley iterates achieve an accuracy of $\mathcal{O}(10^{-13})$ within the next 3 iterations, whereas the fixed-point iteration requires a total of 20 iterations. The significant acceleration is due to the fact that Cayley-BFGS uses Jacobian information, which is computed comparatively cheaply via the BFGS-update formula. As designed, both iteration schemes based on QAA are energy-conserving to machine precision, note that the logarithmic scaling in Fig.~\ref{fig:NewtonConvergence} (right) prevents exact zeros from being displayed. 

\begin{figure}[t!]
\centering
  \begin{subfigure}
      {\includegraphics[width=0.45\linewidth]{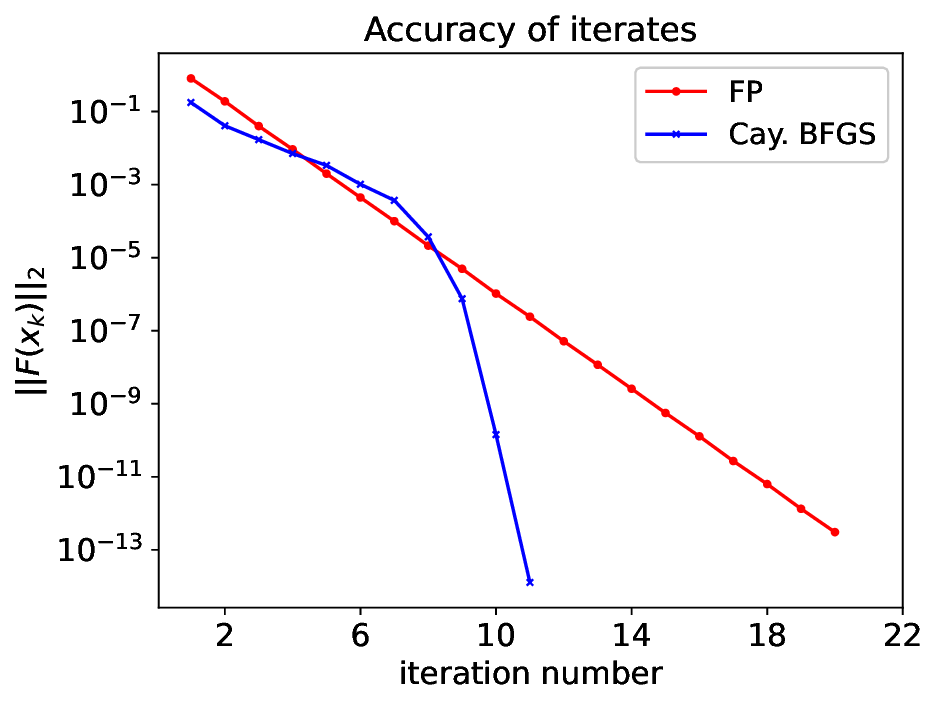}}%
  \end{subfigure}
  \hfill
 \begin{subfigure}
      {\includegraphics[width=0.45\linewidth]{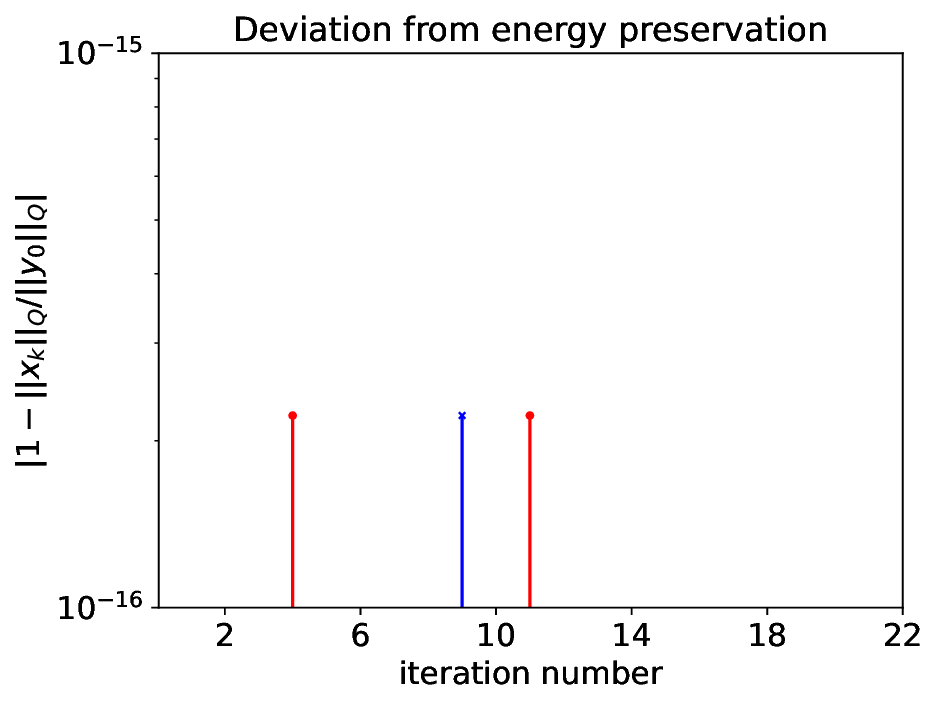}}%
  \end{subfigure}
\caption{Performance of  fixed point iteration (FP) and Cayley-BFGS method in the computation of $y_1$ for the rigid body model; $h=0.1$. Left: Accuracy of iterates w.r.t.\ Euclidean norm of the residual $\widetilde{r}$ from \eqref{eq:nonlinear_residual}.
Right: Deviation from energy-conservation.}
\label{fig:NewtonConvergence}
\end{figure}

\subsection{Long-term behavior}
To explore the long-term performance, we solve the Poisson system on the time interval $[0,1]$ using different step sizes $h$. In each time step $t_i=ih$, we iteratively compute the numerical solution $y_i\approx y(ih)$ with FP and Cayley-BFGS. We terminate the iteration once the Euclidian norm of the residual $\widetilde{r}$  is less than $h^2$ ---in consistency with the order of the midpoint rule. The reference solution is determined via the Python routine 
\texttt{scipy.integrate.solve\_ivp} using LSODA (Adams/BDF method with automatic stiffness detection and switching) with an absolute tolerance of $2\cdot 10^{-14}$, cf.\ \cite{petzold}.

Figure~\ref{fig:NewtonnOrder} illustrates the impact of the underlying iteration schemes and the stopping criterion on the numerical solution $y_i$ in terms of accuracy/convergence and energy preservation, and also highlights the respective costs. The convergence of the numerical solution to the reference is achieved with the used stopping criterion in both iteration schemes; we numerically observe the expected convergence order $p=2$ of the midpoint rule in the $L^2([0,1])$-norm as $h\rightarrow 0$ (Fig.~\ref{fig:NewtonnOrder}, left). Moreover, both schemes precisely preserve energy in the magnitude of machine precision. The increase of the deviation to $\mathcal{O}(10^{-12})$ for smaller $h$ results from the accumulation of rounding errors which increase with increasing number of time steps (Fig.~\ref{fig:NewtonnOrder}, middle). 
Concerning the computational cost, Fig.~\ref{fig:NewtonnOrder} (right) shows the averaged number of outer iterations per time step for different step sizes. The cost for an inner iteration with QAA is the same in both schemes. Both schemes have the tendency that the number of outer iterations that are required to meet the accuracy decreases with decreasing $h$ due to the improved starting vector $x_0$. On average, Cayley-BFGS needs less iterations than FP. However, because of the incorporated stopping criterion, the savings in iterations are significantly lower than expected from the results in Section~\ref{sec:numNL_conv}, where the schemes run until convergence.

  \begin{figure}[t!]
  \centering
  \begin{subfigure}
      {\includegraphics[width=0.33\linewidth]{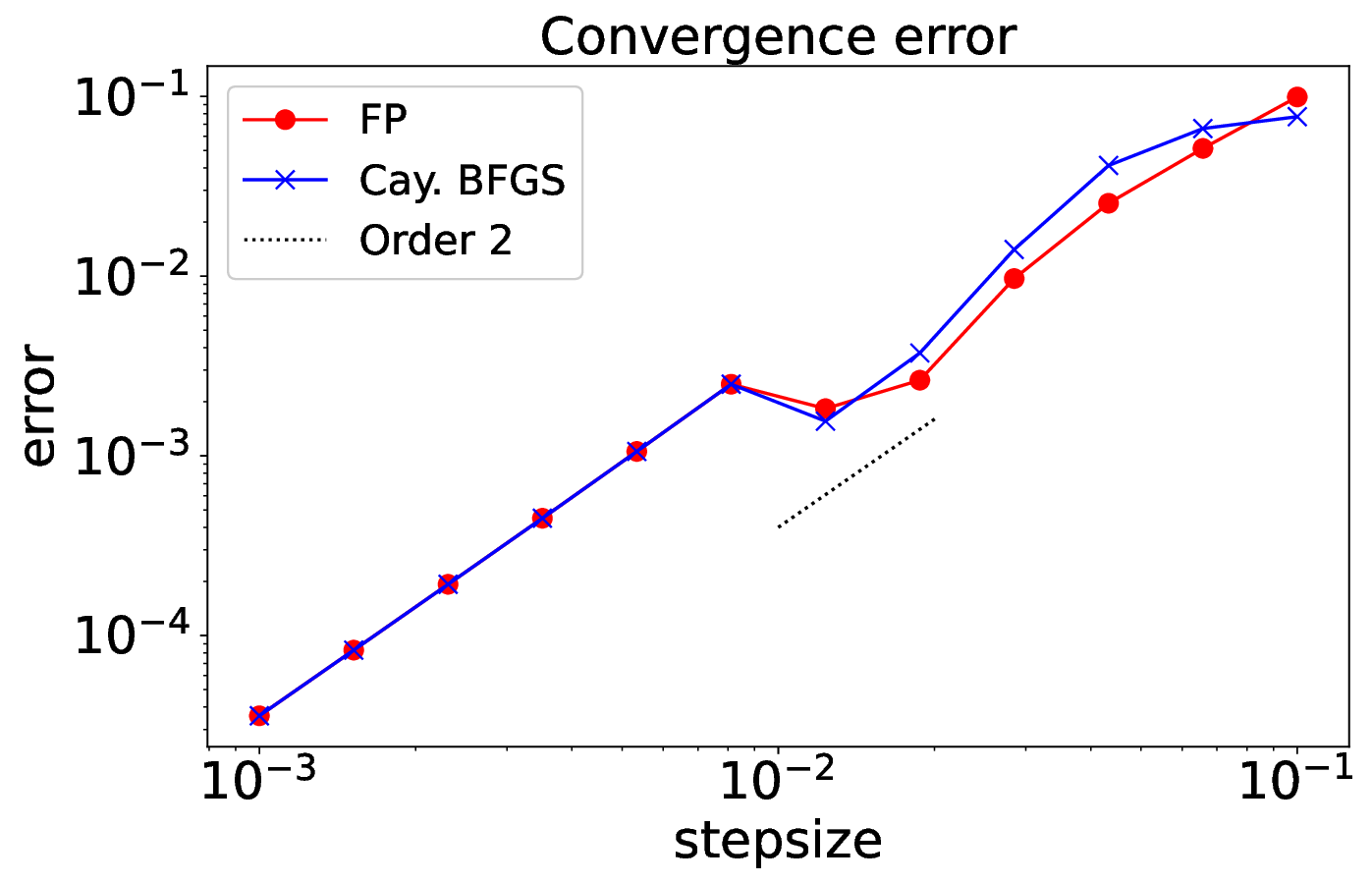}}%
  \end{subfigure}
 \begin{subfigure}
      {\includegraphics[width=0.335\linewidth]{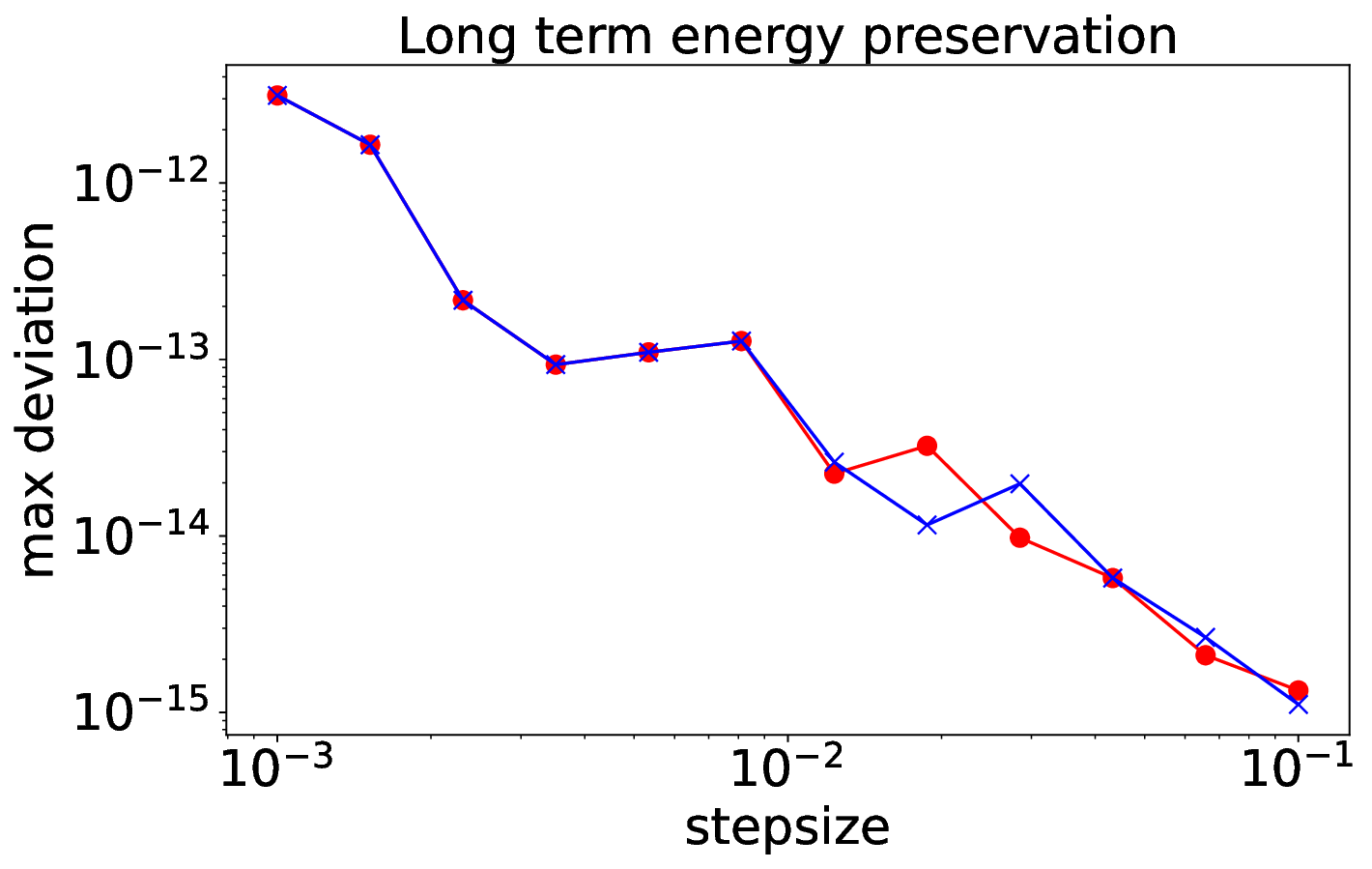}}%
  \end{subfigure}
\begin{subfigure}
      {\includegraphics[width=0.32\linewidth]{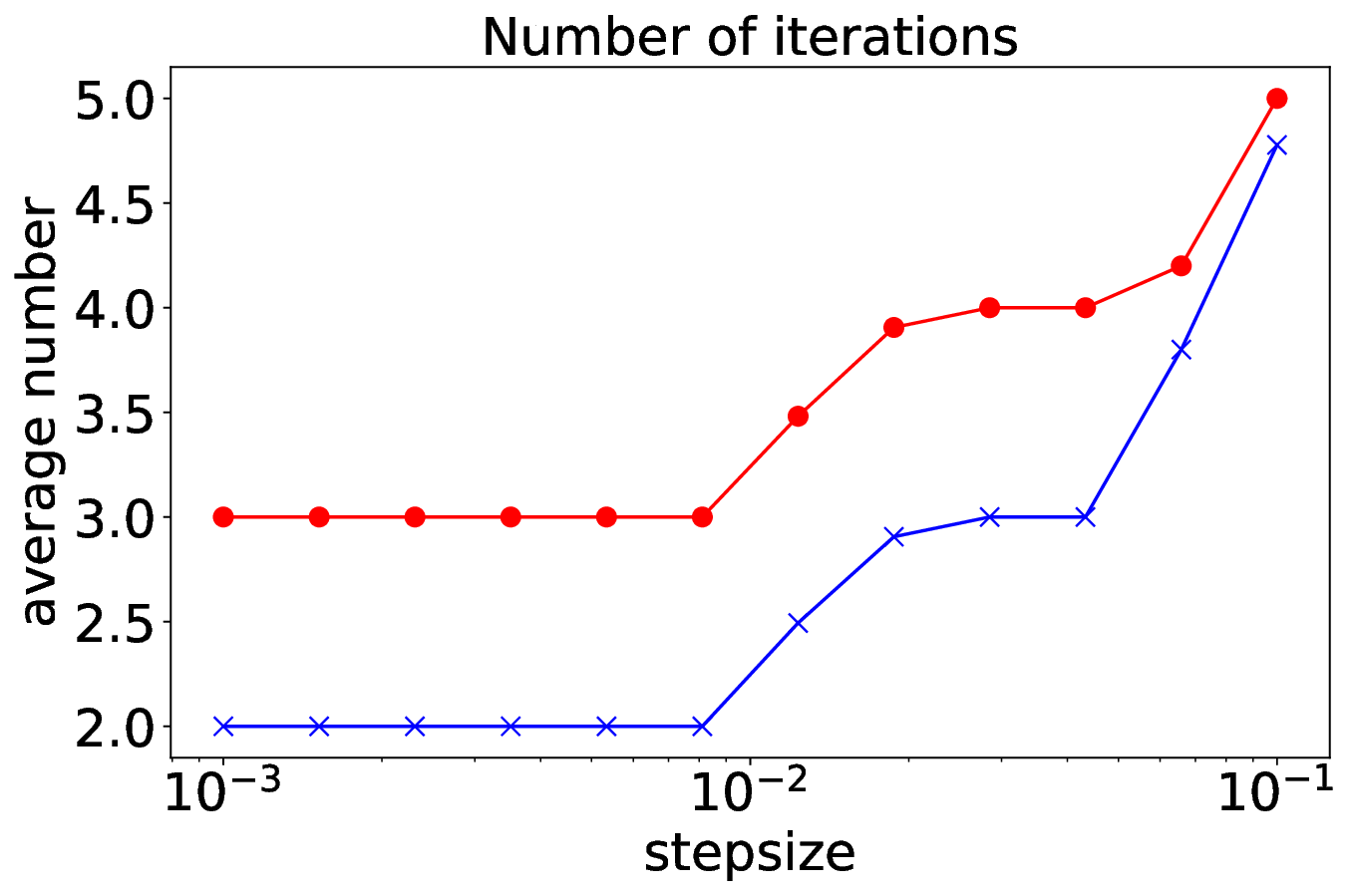}}%
  \end{subfigure}
\caption{Performance of FP and Cayley-BFGS in Gauss integration (midpoint rule) of the rigid body model over time interval $[0,1]$ for different step sizes $h$. Left: Convergence behavior of midpoint rule w.r.t.\ $L^2([0,1])$-error. Middle: Absolute deviation from energy-conservation, i.e.\ $\max_i| 1 - \lVert y_i \rVert_Q/\lVert y_0 \rVert_Q|$. Right: Averaged number of outer iterations per time step.}
\label{fig:NewtonnOrder}
\end{figure}

\section{Conclusion}

In this work, we have developed energy-preserving iteration schemes for the Gauss integration of Poisson systems. The proposed Krylov-subspace-based $Q$-Arnoldi approximation (QAA) for linear systems provides energy conservation not only at convergence, but also at the level of the individual iterates. Hence, the iteration can be terminated as soon as a desired accuracy is reached. QAA offers significant efficiency gains over standard iterative solvers used in implicit time integration and enables the efficient computation of high-dimensional problems and high-order approximations. In case of non-linear Poisson systems and a midpoint discretization, QAA can be cleverly embedded in non-linear solvers of fixed-point type as well as of Newton type. The resulting iteration schemes for the non-linear systems arising at each time step are of linear, superlinear and quadratic convergence order with comparable costs to the classical versions, but have energy-preserving iterates.


\end{document}